\documentclass[11pt]{amsart}

\usepackage{amsthm, amsfonts, amssymb, amscd, rotating, amsmath}
\usepackage[pagebackref,colorlinks]{hyperref}
\usepackage{tikz-cd}
\usepackage[most]{tcolorbox}

\theoremstyle{definition}
\newtheorem{ntn}{Notation}[section]
\newtheorem{dfn}[ntn]{Definition}
\theoremstyle{plain}
\newtheorem{lem}[ntn]{Lemma}
\newtheorem{prp}[ntn]{Proposition}
\newtheorem{thm}[ntn]{Theorem}
\newtheorem{cor}[ntn]{Corollary}

\theoremstyle{definition}

\newtheorem{rem}[ntn]{Remark}
\newtheorem{exa}[ntn]{Example}

\numberwithin{equation}{section}

\newcommand{\N}{\mathbb{N}}
\newcommand{\z}{\mathbb{Z}}
\newcommand{\q}{\mathbb{Q}}

\newcommand{\C}{\mathbb{C}}
\newcommand{\F}{\mathbb{F}}

\newcommand{\ppp}{\mathfrak{p}}
\newcommand{\qqq}{\mathfrak{q}}
\newcommand{\mmm}{\mathfrak{m}}

\newcommand{\OO}{\mathcal{O}}

\renewcommand{\SS}{\mathcal{S}}

\renewcommand{\aa}{{A^\times}}

\newcommand{\se}{\subseteq}
\newcommand{\arr}{\rightarrow}

\newcommand{\two}{\twoheadrightarrow}

\newcommand{\GL}{{\rm GL}}

\newcommand{\PSL}{{\rm PSL}}
\newcommand{\SL}{{\rm SL}}
\newcommand{\GE}{{\rm GE}}
\newcommand{\Ee}{{\rm E}}

\renewcommand{\char}{{\rm char}}

\newcommand{\ab}{{\rm ab}}

\newcommand{\Spec}{{\rm Spec}}

\newcommand {\mtxx}[4]
{\left(\!
\begin{array}{cc}
\!\!#1 & \!\!#2 \\
\!\!#3 & \!\!#4
\end{array}\!\!
\right)}

\newtheoremstyle{athm}
{}
{}
{\itshape}
{}
{\scshape}
{}
{.5em}
{\thmnote{#3}}
\theoremstyle{athm}

\begin{document}

\title[The abelianization of $\text{SL}_2$]
{Abelianization of $\text{SL}_2$ over Dedekind domains of arithmetic type}
\author{Behrooz Mirzaii, Bruno R. Ramos, Thiago Verissimo}
\address{\sf Instituto de Ci\^encias Matem\'aticas e de Computa\c{c}\~ao (ICMC),
Universidade de S\~ao Paulo, S\~ao Carlos, Brasil}
\email{bmirzaii@icmc.usp.br}
\email{brramos@usp.br}
\email{thiagovlg@usp.br}

\begin{abstract}
We determine the exact group structure of the abelianization of $\text{SL}_2(A)$, in which $A$ is 
a Dedekind domain of arithmetic type with infinitely many units. In particular, our results show 
that $\text{SL}_2(A)^\text{ab}$ is finite, with exponent dividing $12$ when $\text{char}(A)=0$, 
and dividing $6$ when $\text{char}(A)>0$. As illustrative examples, we compute $\text{SL}_2(A)^\text{ab}$ 
explicitly for instances where $A$ is the ring of integers of a real quadratic field or a cyclotomic 
extension. \\

\noindent \textsf{MSC(2020): 11F75, 11R04, 20J06.}\\
\noindent \textsf{Key words: 
Abelianization, Special linear group, Dedekind domains of arithmetic type.}
\end{abstract}

\maketitle

%%%%%%%%%%%%%%%%%%%%%%%%%%%%%%%%%%%%%%%%%%%%%%%%%%%%%%%%%%%%%%%%%%%%%%%%%%%%%%%%%%
\section*{Introduction}
%%%%%%%%%%%%%%%%%%%%%%%%%%%%%%%%%%%%%%%%%%%%%%%%%%%%%%%%%%%%%%%%%%%%%%%%%%%%%%%%%%

We investigate the first integral homology group of $\SL_2(A)$, where $A$ is a Dedekind 
domain of arithmetic type. By definition, a Dedekind domain of arithmetic type is the ring of 
$S$-integers in a global field $K$, where $S$ is a set of primes containing all infinite 
primes (see Section~\ref{sec1}).

Our primary contribution is the explicit determination of the group structure of the first 
integral homology of $\SL_2(A)$, where $A$ is a Dedekind domain of arithmetic type with 
infinitely many units. Since the first integral homology group of a group $G$ is naturally 
isomorphic to its abelianization $G^\ab :=G/[G,G]$, our results yield the precise structure 
of $\SL_2(A)^\text{ab}$. 

Let $A$ be a Dedekind domain of arithmetic type with characteristic zero and fraction field 
$K$. Denote by $\OO_K$ the ring of integers of $K$. We now consider the following sets of 
primes in $\OO_K$:
\[
S_2=\{\ppp\in \Spec(\OO_K): \ppp\mid 2, e_\ppp=1,  [\OO_K/\ppp: \F_2]=1\},
\]
\[
S_2'=\{\ppp\in \Spec(\OO_K): \ppp\mid 2, e_\ppp>1, [\OO_K/\ppp: \F_2]=1\},
\]
\[
\!\!\!\!\!\!\!\!\!\!\!\!\!\!\!\!\!\!
S_3=\{\ppp\in \Spec(\OO_K): \ppp \mid 3, [\OO_K/\ppp: \F_3]=1\},
\]
where $e_\ppp$ denotes the ramification index of the prime $\ppp$ and $[\OO_K/\ppp: \F_p]$ 
is the inertia degree of $\ppp$ over $p$ (see Section~\ref{sec1}).

If $A=\OO_{K,S}$ and $S$ contains at least two primes, including all infinite primes, then 
we show that
\[
\SL_2(A)^\ab \simeq
%(\z/4)^{|S_2\backslash S|} \oplus (\z/2)^{2|S_2'\backslash S|} \oplus (\z/3)^{|S_3\backslash S|}.
\bigoplus_{\ppp\in S_2\backslash S} \z/4 \oplus 
\bigoplus_{\ppp\in S_2'\backslash S} (\z/2\oplus \z/2) \oplus
\bigoplus_{\ppp\in S_3\backslash S} \z/3.
\]
In particular, $\SL_2(A)^\ab$ is of exponent dividing $12$ (see Theorem \ref{Main}).

Let us now consider the case where $A=\OO_{K,S}$ is of positive characteristic and 
$[K:\F_q(t)]<\infty$. If $q=2$, define
\[
S_2''=\{\ppp\in\Spec(\OO_K): \ppp \mid t-a\ \text{, for some $a\in \F_2$, and}\ [\OO_K/\ppp:\F_2]=1\}.
\]
Similarly, if $q=3$, define
\[
S_3''=\{\ppp\in\Spec(\OO_K): \ppp \mid t-a\ \text{, for some $a\in \F_3$, and}\ [\OO_K/\ppp:\F_3]=1\}.
\]
We show that if  $S$ contains at least two primes, including all infinite primes, then
\[
\SL_2(A)^\ab \simeq 
%\begin{cases}
%(\z/2)^{2|S_2''\backslash S|} &  \text{if $q=2$}\\
%(\z/3)^{|S_3''\backslash S|}  & \text{if $q=3$.}\\
%0 & \text{if $q>3$}
%\end{cases}
\begin{cases}
\bigoplus_{\ppp\in S_2''\backslash S}(\z/2\oplus \z/2) &  \text{if $q=2$}\\
\bigoplus_{\ppp\in S_3''\backslash S} \z/3  & \text{if $q=3$.}\\
0 & \text{if $q>3$}
\end{cases}
\]
In particular, $\SL_2(A)^\ab$ is of exponent dividing $6$ (see Theorem \ref{main2}). 

Using the above structural result obtained for the abelianization of $\SL_2$ of Dedekind domain 
of arithmetic type $A=\OO_{K,S}$ of characteristic zero, we examine two 
specific cases: when $K$ is a quadratic number field, and when $K$ is a Galois extension of $\q$.
We show that for a square-free positive integer $d$, if $\OO_d$ denotes the ring of integers 
of $K=\q(\sqrt{d})$, then (see Theorem \ref{d>0})
\[
\SL_2(\mathcal{O}_d)^\ab\simeq \begin{cases}
0                &\text{if $d\equiv 5 \pmod  {24}$}\\
\z/12\oplus\z/12 &\text{if $d\equiv 1 \pmod  {24}$}\\
\z/12\oplus \z/4 &\text{if $d\equiv 9 \pmod  {24}$}\\
\z/3 \oplus \z/3 &\text{if $d\equiv 13 \!\!\!\pmod  {24}$}\\
\z/3             &\text{if $d\equiv 21 \!\!\!\pmod  {24}$}\\
\z/4 \oplus \z/4 &\text{if $d\equiv 17 \!\!\!\pmod  {24}$}\\
\z/2 \oplus \z/2 &\text{if $d\equiv 2, 11, 14, 20, 23 \!\!\!\pmod  {24}$}\\
\z/6 \oplus \z/6 &\text{if $d\equiv 4,7,10,19, 22 \pmod  {24}$}\\
\z/6 \oplus \z/2 &\text{otherwise.}
\end{cases}
\]

When considering $\OO_{-d}$, the ring of integers of $K=\q(\sqrt{-d})$, for a square-free negative 
integer $-d$, our earlier results do not directly apply, since $\OO_{-d}$ has only finitely many 
units. The group structure of $\SL_2(\mathcal{O}_{-d})$ is significantly more intricate and has 
been explicitly computed only in specific cases (see, e.g., \cite{cohn1968}, \cite{swan1971}, 
\cite{rahm2013}, \cite{rahm-2013}). However, if we instead consider the localization $\OO_{-d, S}$, 
where $S$ is a set of primes containing at least two elements, including the infinite prime, 
we establish a result analogous to the positive characteristic case discussed earlier 
(see Theorem~\ref{d<0}).

Finally, we examine the abelianization $\SL_2(A)^\ab$, where $A$ is the ring of integers of a Galois 
extension $K/\q$ with $[K:\q]>2$ (see Theorem \ref{Galois}). In particular, we show that, for cyclotomic 
extension $\q(\zeta_N)/\q$, the group $\SL_2(\mathbb{Z}[\zeta_N])^\ab$ satisfies
\[
\SL_2(\mathbb{Z}[\zeta_N])^\ab \simeq \begin{cases}
\z/12     &  \text{if $N=1,2$}\\
\z/2\oplus \z/2  &  \text{if $N=2^k$, $k\geq 2$}\\
\z/3      & \text{if $N=2^{k}3^m$, $k\in \{0,1\}$, $m>0$}\\
0 & \text{otherwise}
\end{cases}
\]
(see Proposition \ref{cyclotomic}).

After completing this article, we realized that, in \cite[Corollary 2]{BS2013}, the order of $\SL_2(A)^\ab$ is 
given when $A$ is the ring of integers of a number field with at least one real embedding. However, no group 
structure is provided. Our main results (Theorems \ref{Main} and \ref{main2}) thus generalize this to a 
significantly broader class of Dedekind domains of arithmetic type and provide an explicit description 
of the group structure in question.

~\\
{\bf Acknowledgments.} 
We'd like to thank Carl Nyberg-Brodda for his constructive feedback on the group $\SL_2(\z[1/n])^\ab$, 
which we discuss in Example~\ref{Z1-n}. This research was made possible, in part, by the support 
provided to the second and third authors through CAPES (Coordena\c{c}\~ao de Aperfei\c{c}oamento 
de Pessoal de N\'ivel Superior) Ph.D. and M.Sc. fellowships (grant numbers 88887.983475/2024-00 
and 88887.955905/2024-00).

%%%%%%%%%%%%%%%%%%%%%%%%%%%%%%%%%%%%%%%%%%%%%%%%%%%%%%%%%%%%%%%%%%%%%%%%%%%%%%%%%%
\section{Dedekind domains of arithmetic type}\label{sec1}
%%%%%%%%%%%%%%%%%%%%%%%%%%%%%%%%%%%%%%%%%%%%%%%%%%%%%%%%%%%%%%%%%%%%%%%%%%%%%%%%%%

A global field $K$ is defined as either a finite extension of the rational numbers $\q$, or a 
finite extension of a rational function field $\F_q(t)$, for some finite field $\F_q$.
In the latter case, we may assume that $K$ is separable over $\F_q(t)$
\cite[page 160]{rv1998}.

The ring of algebraic integers of $K$, denoted by $\OO_K$, is the subring of all elements of 
$K$ that are integral over $\z$ (if $K$ has characteristic zero) or over $\F_q(t)$
(if $K$ has positive characteristic). Notably, $\OO_K$ is a Dedekind domain.

If $K$ has characteristic zero ($\char(K)=0$), any embedding into $\C$ defines an archimedean 
absolute value on $K$. However, if $K$ has positive characteristic ($\char(K)>0$) and is a 
finite extension of $\F_q(t)$, we consider the absolute value on $\F_q(t)$ given by 
$|f(t)/g(t)|_\infty:=q^{\deg(f)-\deg(g)}$.
This absolute value extends to a finite number of non-archimedean absolute values on $K$ 
\cite[Proposition 4.31]{rv1998}. In both scenarios, these specific absolute values 
are known as the {\it infinite primes} (or infinite places) of $K$ \cite[page 164]{rv1998}.

Any non-trivial prime ideal of $\OO_K$ directly corresponds to a non-trivial non-archimedean 
absolute value on $K$. These specific absolute values are what we refer to as the {\it finite primes} 
(or finite places) of $K$.

A {\it prime} of $K$ is simply either an infinite prime or a finite prime. Notably, each prime ideal 
$\ppp$ in $\OO_K$ defines a discrete valuation $v_\ppp: K \to \z\cup\{\infty\}$ \cite[\S~6.1]{keune2023}.

Let $S$ be a finite set of primes of $K$ that includes all infinite primes. The ring of $S$-integers 
in $K$, denoted by $\OO_{K,S}$, is defined as:
\[
\OO_{K,S}=\{x\in K: v_\ppp(x)\geq 0, \ \text{for all}\ \ppp\notin S\}.
\]
It is worth noting that if the set $S$ includes only the infinite primes, then the ring of $S$-integers, 
$\OO_{K,S}$, is simply the ring of integers of $K$, $\OO_K$ \cite[Proposition~4.35]{rv1998}.

It is a well-known result that $\OO_{K,S}$ is a Dedekind domain (see \cite[Theorem~6.24]{keune2023}).
In fact, $\OO_{K,S}\simeq {\mathcal S}^{-1}\OO_K$, where 
$\mathcal{S}=\OO_K\backslash \bigcup_{\ppp\in\Spec(\OO_K)\backslash S}\ppp$ 
\cite[Exercise~14, Chap.~6]{keune2023}. The definition provided below is adapted from 
\cite[page~83]{bms1967}.

\begin{dfn}
A ring $A$ is a Dedekind domain of arithmetic type if $A=\OO_{K,S}$, for some global 
field $K$ and for some finite non-empty set $S$ of primes of $K$ that includes all of its 
infinite primes.
\end{dfn} 

It is well known that if $I$ is a nontrivial ideal of a Dedekind domain $A$, then every ideal 
of the quotient ring $A/I$ is principal \cite[Exercise 7, Chap. 9]{am1969}. The following fact 
is also well known, but we include a proof for completeness.

\begin{lem}\label{PFR}
Let $A$ be a Dedekind domain of arithmetic type. If $I$ is a nontrivial ideal of $A$, 
then the quotient ring $A/I$ is finite.
\end{lem} 
\begin{proof}
Let $A=\OO_{K,S}$ and let $I=\ppp_1^{\alpha_1}\cdots \ppp_r^{\alpha_r}$ be the prime 
decomposition of $I$ in $A$. By the Chinese Remainder Theorem, 
$A/I\simeq A/\ppp_1^{\alpha_1}\times \cdots \times A/\ppp_r^{\alpha_r}$. 
Hence, to prove the lemma, it suffices to show that for every nontrivial prime ideal
$\ppp$ of $A$ and every positive integer $\alpha$, 
$A/\ppp^\alpha$ is finite. We proceed by induction on $\alpha$. Let $\alpha=1$ and let 
$\qqq=\OO_K\cap \ppp$. Then $A=\SS^{-1}\OO_K$ and $\ppp=\SS^{-1}\qqq$ \cite[Proposition 3.16]{am1969}, 
where $\SS$ is as above. We have
\[
A/\ppp\simeq \SS^{-1}\OO_K/\SS^{-1}\qqq\simeq \SS^{-1}(\OO_K/\qqq)
\simeq \overline{\SS}^{-1}(\OO_K/\qqq)\simeq \OO_K/\qqq,
\]
where $\overline{\SS}:=\{\overline{s}\in \OO_K/\qqq: s\in \SS\}$. For the second isomorphism, see 
\cite[Corollary 3.4(iii)]{am1969} and for the third isomorphism see \cite[Chap.~3, Exercise~4]{am1969}.
The final isomorphism follows from the fact that $\OO_K/\qqq$ is a field. 

Let $R$ be either $\z$, when the
characteristic of $\OO_K$ is zero, or be $\F_{p^r}[X]$, for some finite field $\F_{p^r}$, 
when the characteristic of $A$ is $p>0$. Then $R$ is a subring of $\OO_K$, and $\OO_K$ 
is a free $R$-module of finite rank \cite[Proposition 1.36, Corollary 1.38]{keune2023}.
Let $\qqq \cap R=(\pi)$, where $\pi$ is a prime element of $R$. Then $\OO_K/\qqq$ is a finite 
extension of the finite field $k:=R/(\pi)$. Hence $\OO_K/\qqq$ is finite, which implies that $A/\ppp$ 
is finite. Now, by the induction hypothesis, assume that $A/\ppp^{\alpha-1}$ is finite. We have
\[
A/\ppp^{\alpha-1}\simeq (A/\ppp^{\alpha})/(\ppp^{\alpha-1}/\ppp^{\alpha}).
\]
On the one hand, $\ppp^{\alpha-1}/\ppp^{\alpha}$ is a principal ideal of $A/\ppp^{\alpha}$;
on the other hand, it is an $A/\ppp$-vector space. Hence $\ppp^{\alpha-1}/\ppp^{\alpha}\simeq A/\ppp$
as $A/\ppp$-vector space. This implies that $\ppp^{\alpha-1}/\ppp^{\alpha}$
is finite. Since $A/\ppp^{\alpha}$ fits into the short exact sequence
\[
0 \arr \ppp^{\alpha-1}/\ppp^{\alpha} \arr A/\ppp^{\alpha} \arr A/\ppp^{\alpha-1} \arr 0,
\]
it follows that $A/\ppp^{\alpha}$ is finite.
\end{proof}

The subsequent result generalizes Dirichlet's Unit Theorem \cite[Theorem~5.35]{keune2023}.

\begin{thm}[$S$-unit Theorem, Dirichlet, Hasse, Chevalley]\label{dir}
Let $A=\OO_{K,S}$ be a Dedekind domain of arithmetic type. Denote by $\aa$ the group of units of 
$A$ and let $\mu(A)$ be the group of roots of unity in $A$. Then
\[
\aa \simeq \mu(A) \oplus \z^{|S|-1},
\]
In particular, $\aa$ is always a finitely generated abelian group. Moreover, $A$ has infinitely 
many units if and only if $|S|\geq 2$.
\end{thm}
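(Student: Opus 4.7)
The approach I would take is the classical logarithmic-embedding proof of Dirichlet's unit theorem, adapted so as to treat the primes in $S$ (archimedean and non-archimedean alike) on equal footing, and to accommodate both the number-field and function-field settings simultaneously.

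First I would define the logarithmic embedding
\[
\lambda\colon \aa \longrightarrow \R^{S}, \qquad u\longmapsto \bigl(\log |u|_v\bigr)_{v\in S},
\]
where $|{-}|_v$ is a suitably normalized absolute value attached to $v$. Since $u\in\aa$ forces $|u|_v=1$ for every $v\notin S$, the product formula for $K$ (valid in both characteristic zero and positive characteristic) gives $\sum_{v\in S}\log|u|_v=0$, so $\lambda$ lands in the hyperplane $H\subset\R^S$ defined by this equation, a real vector space of dimension $|S|-1$.

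Next I would analyze $\ker\lambda$ and $\im\lambda$. An element of $\ker\lambda$ satisfies $|u|_v=1$ at every prime of $K$, hence lies in a discrete bounded (and therefore finite) subgroup of $K^\times$ consisting precisely of the roots of unity in $A$; thus $\ker\lambda=\mu(A)$, a finite cyclic group. Showing that $\im\lambda$ is a full-rank lattice in $H$ is the heart of the argument and the main obstacle. Discreteness of $\im\lambda$ is routine, following from the finiteness of $\{u\in\aa : |u|_v\le C \text{ for all } v\in S\}$ for each $C>0$, which is a standard consequence of the discreteness of $K$ in its $S$-adele ring. The genuinely delicate half is the rank statement, for which I would invoke Minkowski's theorem on adelic convex bodies in characteristic zero (respectively the Riemann--Roch theorem on the underlying smooth curve in positive characteristic) to produce, for each proper non-empty subset $T\subsetneq S$, a unit $u_T\in\aa$ whose image under $\lambda$ is strictly positive on the coordinates indexed by $T$ and strictly negative on those indexed by $S\setminus T$; a standard positivity argument then shows that the vectors $\lambda(u_T)$ span $H$ over $\R$.

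With both halves in hand, $\im\lambda$ is free abelian of rank $|S|-1$, so the short exact sequence
\[
1\longrightarrow \mu(A)\longrightarrow \aa \xrightarrow{\,\lambda\,}\im\lambda\longrightarrow 0
\]
splits and yields $\aa\simeq\mu(A)\oplus\z^{|S|-1}$. The final assertion is then immediate: $\mu(A)$ is finite, so $\aa$ is infinite precisely when the free rank $|S|-1$ is positive, i.e.\ when $|S|\ge 2$.
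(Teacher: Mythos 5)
Your proposal is a correct outline of the classical logarithmic-embedding proof of the $S$-unit theorem, with the kernel identified as $\mu(A)$, the image shown to be a full lattice in the trace-zero hyperplane of $\R^S$, and the splitting giving $A^\times\simeq\mu(A)\oplus\z^{|S|-1}$. The paper itself offers no proof, deferring entirely to Weiss and Keune, and those references carry out essentially the same argument you sketch, so your approach matches the intended one.
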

\begin{proof}
See \cite[Theorem 5.3.10]{weiss1976} for the general case and \cite[Theorem 6.31]{keune2023} 
for the number field case.
\end{proof}

In characteristic zero, it is straightforward to verify that the unit group $\aa=\OO_{K,S}^\times$ 
is finite if and only if $K$ is either the field of rational numbers or a totally imaginary quadratic 
field, and $S$ consists of a single infinite prime.

%\textcolor{blue}{In positive characteristic $p$, if $|S| = 1$, it follows that $A$ is the coordinate 
%ring of an affine curve obtained by removing a point from a smooth projective curve over a finite 
%field $k$ of characteristic $p>0$ \cite[\S 3]{serre1970}. Since $\aa$ comprises rational functions 
%on the curve, where the denominator is not zero at any point on the curve, it means that 
%$\aa=\OO_{K,S}^\times$ is finite if and only if $K$ is the field of fractions of some polynomial 
%ring, for example, $K = k(T)$ and $A = k[T]$.}

Now, let $K$ be a number field and let $p$ be a prime number. The prime decomposition of the ideal 
$p\OO_K$ in the ring of integers $\OO_K$ is given by
\[
p\OO_K=\ppp_1^{e_{\ppp_1}}\cdots \ppp_k^{e_{\ppp_k}},
\]
where each exponent $e_{\ppp_i}\geq 1$. It is a well-known result that the degree of the field extension 
$[K:\q]$ satisfies the fundamental identity:
\[
[K:\q]=\sum_{i=1}^k e_{\ppp_i}[\OO_K/\ppp_i: \F_p]
\]
(see \cite[Theorem 3.4]{keune2023}). A prime ideal $\ppp$ in $\OO_K$ divides $p$ if and only if 
$\ppp$ is one of the $\ppp_i$. The exponent $e_{\ppp_i}$ is called the {\it ramification index} of $\ppp_i$
over $p$, and the degree $[\OO_K/\ppp_i: \F_p]$ is called the {\it inertia degree} of $\ppp_i$ over $p$.

%%%%%%%%%%%%%%%%%%%%%%%%%%%%%%%%%%%%%%%%%%%%%%%%%%%%%%%%%%%%%%%%%%%%%%%%%%%%%%%%%%
\section{\texorpdfstring{$\GE_2$}{Lg}-rings}
%%%%%%%%%%%%%%%%%%%%%%%%%%%%%%%%%%%%%%%%%%%%%%%%%%%%%%%%%%%%%%%%%%%%%%%%%%%%%%%%%%

Let $A$ be a commutative ring with $1\neq 0$. The special linear group $\SL_2(A)$ 
consists of all $2 \times 2$ matrices over $A$ with determinant equal to one. It 
is straightforward to check that 
the center of $\SL_2(A)$ is $\mu_2(A)I_2$, where $I_2$ is the identity matrix and 
\[
\mu_2(A):=\{a\in A:a^2=1\}.
\]
Note that if $A$ is a domain, then $\mu_2(A)=\{\pm1\}$. 
Furthermore, $\SL_2$ is a functor from the category of commutative rings to the category 
of groups. 

\begin{lem}\label{A-B}
If $A$ and $B$ are two commutative rings, then
\[
\SL_2(A\times B)\simeq \SL_2(A) \times \SL_2(B).
\]
\end{lem}
\begin{proof}
It is straightforward to verify that the map 
\[
\SL_2(A) \times \SL_2(B) \arr \SL_2(A\times B),
\]
given by 
\[
\Bigg(\begin{pmatrix}
a_{11} & a_{12}\\
a_{21} & a_{22}
\end{pmatrix},
\begin{pmatrix}
b_{11} & b_{12}\\
b_{21} & b_{22}
\end{pmatrix}
\Bigg)
\mapsto
\begin{pmatrix}
(a_{11}, b_{11}) & (a_{12}, b_{12})\\
(a_{21}, b_{21}) & (a_{22}, b_{22})
\end{pmatrix},
\]
is an isomorphism of groups.
\end{proof}

Define the elementary subgroup $\Ee_2(A) \subseteq \SL_2(A)$ to be the 
subgroup generated by the elementary matrices:
\[
E_{12}(a):=\begin{pmatrix}
1 & a\\
0 & 1
\end{pmatrix} \ \ \ \text{ and } \ \ \ 
E_{21}(a):=\begin{pmatrix}
1 & 0\\
a & 1
\end{pmatrix},
\]
for all $a \in A$. The ring $A$ is called a $\GE_2$-{\it ring} if $\Ee_2(A)=\SL_2(A)$.

\begin{prp}[Cohn]\label{cohn}
{\rm (i)} Semilocal rings are $\GE_2$-rings. 
\par {\rm (ii)} Euclidean domains are $\GE_2$-rings. 
\end{prp}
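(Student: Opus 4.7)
For both parts, the plan follows a uniform reduction strategy: given $M = \begin{pmatrix} a & b \\ c & d \end{pmatrix} \in \SL_2(A)$, we use the hypothesis-specific input to find, via left multiplication by elementary matrices, a product $EM$ whose $(1,1)$ entry is a unit $u \in \aa$. Once this is achieved, a further left multiplication by $E_{21}(-cu^{-1})$ clears the $(2,1)$ entry (the resulting $(2,2)$ entry is forced to be $u^{-1}$ by the determinant condition), and a right multiplication by an appropriate $E_{12}(\cdot)$ clears the $(1,2)$ entry. The outcome is $\diag(u,u^{-1})$, which is itself a product of elementary matrices via the identity
\[
\diag(u, u^{-1}) = w(u)\, w(1)^{-1}, \qquad w(x) := E_{12}(x)\, E_{21}(-x^{-1})\, E_{12}(x),
\]
easily verified by computing $w(x) = \begin{pmatrix} 0 & x \\ -x^{-1} & 0 \end{pmatrix}$ directly.

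For (i), the required input is stable range one for semilocal rings: given a unimodular pair $(a,c) \in A^2$, there exists $t \in A$ with $a + tc \in \aa$. I would establish this by passing to the quotient by the Jacobson radical $J$, using $A/J \simeq \prod_{i=1}^n A/\mmm_i$ from the Chinese Remainder Theorem. The problem becomes trivial componentwise in each field $A/\mmm_i$, and the solution lifts back to $A$ since $1 + J \subseteq \aa$. Applying this to $(a,c)$, which is unimodular because $ad - bc = 1$, and setting $u := a + tc$, the left multiplication $M \mapsto E_{12}(t) M$ places $u$ in the $(1,1)$ slot, and the rest of the reduction to $\diag(u,u^{-1})$ proceeds as in the uniform strategy.

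For (ii), the input is simply the Euclidean algorithm. Let $\varphi : A \setminus \{0\} \to \N$ be the Euclidean function. While both $a$ and $c$ are nonzero, left multiplication of $M$ by $E_{12}(-q)$ when $\varphi(a) \geq \varphi(c)$ (or by $E_{21}(-q)$ when $\varphi(c) > \varphi(a)$), with $q$ chosen by division with remainder, strictly decreases $\max(\varphi(a),\varphi(c))$. Since $\N$-valued sequences cannot decrease indefinitely, the process terminates with $c = 0$; if instead it ends with $a = 0$, one further left multiplication by $w(1) = E_{12}(1) E_{21}(-1) E_{12}(1)$ swaps the roles. The determinant relation $ad - bc = 1$ then forces $a \in \aa$, and the reduction to $\diag(a, a^{-1})$ finishes exactly as in (i).

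The bulk of the argument sits in (i): proving stable range one for semilocal rings via the Jacobson radical reduction sketched above, together with the direct matrix identity for $\diag(u,u^{-1})$. The subtle point is choosing a single element $t \in A$ so that $a + tc$ becomes invertible modulo every $\mmm_i$ simultaneously, which is precisely what the componentwise CRT step delivers. Part (ii) is a classical direct computation and presents no obstacle beyond bookkeeping the iterations of the Euclidean algorithm.
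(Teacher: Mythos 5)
The paper offers no argument for this proposition at all: its ``proof'' consists of citations to Silvester (for semilocal rings) and to Cohn's \S 2 (for Euclidean domains). Your proposal supplies a genuine self-contained proof, and it is correct. For (i) you prove that a commutative semilocal ring has stable range one --- reduce modulo the Jacobson radical $J$, use the Chinese Remainder Theorem to write $A/J$ as a product of fields, choose $t$ componentwise so that $a+tc$ is nonzero in every residue field (possible because $(a,c)$ is unimodular, so in each component at least one of $a,c$ survives), and lift using the fact that an element is a unit if and only if it is a unit modulo $J$ --- and then you run the standard reduction of an $\SL_2$ matrix with unit $(1,1)$ entry to $\diag(u,u^{-1})=w(u)w(1)^{-1}$, which is the classical Whitehead/Cohn identity. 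For (ii) you use the Euclidean algorithm on the first column. Both arguments are the expected ones and are essentially what the cited sources contain, so the mathematical content matches; the difference is only that you make it explicit. One small bookkeeping slip in (ii): when $\varphi(a)=\varphi(c)$, a single division step does \emph{not} strictly decrease $\max(\varphi(a),\varphi(c))$ --- after replacing $a$ by the remainder $r$ with $\varphi(r)<\varphi(c)$, the new maximum is $\varphi(c)$, equal to the old one. The fix is trivial: use $\min(\varphi(a),\varphi(c))$ or $\varphi(a)+\varphi(c)$ as the terminating measure (either strictly decreases at each step while both entries are nonzero), or note that the maximum strictly decreases after at most two steps. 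This does not affect the validity of the proof.
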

\begin{proof}
The first claim is established in \cite[p. 245]{silv1982}, while the second can be found in 
\cite[\S2]{cohn1966}.
\end{proof}

\begin{cor}\label{f-ring}
Any finite ring is a $\GE_2$-ring.
\end{cor}
\begin{proof}
Any finite ring has a finite number of maximal ideals and so is semilocal.
%Let $A$ be a finite ring. Then $A$ is Artinian and so it is isomorphic to a finite product of finite local 
%rings \cite[Theorem 8.7]{am1969}. This shows that $A$ has a is semilocal. 
Now, the claim follows from 
Proposition~\ref{cohn}. 
\end{proof}

To prove our main results, we require the following lemma.

\begin{prp}[\cite{{B-E2024}}]\label{local}
Let $A$ be a commutative local ring with maximal ideal $\mmm_A$. Then
\[
\SL_2(A)^\ab\simeq
\begin{cases}
A/\mmm_A^2 &  \text{if $|A/\mmm_A|=2$}  \\
A/\mmm_A &  \text{if $|A/\mmm_A|=3$.}  \\
0 &  \text{if $|A/\mmm_A|\geq  4$}  \\
\end{cases}
\]
\end{prp}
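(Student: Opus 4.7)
Since a local ring is semilocal, the preceding proposition makes $A$ a $\GE_2$-ring, so $\SL_2(A) = \Ee_2(A)$ is generated by the elementary matrices $E_{12}(a), E_{21}(a)$. Writing $\phi(a):=[E_{12}(a)]$ and $\psi(a):=[E_{21}(a)]$ in $\SL_2(A)^\ab$, both are additive group homomorphisms from $(A,+)$. My plan is first to extract a few universal relations in $\SL_2(A)^\ab$ from standard matrix identities, and then to run a case analysis on $|A/\mmm_A|$.

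The key relations, obtained by conjugating elementary matrices by the elements
\[
w(u)=E_{12}(u)E_{21}(-u^{-1})E_{12}(u), \qquad h(u)=w(u)w(1)^{-1}=\diag(u,u^{-1})
\]
of $\Ee_2(A)$ (for $u\in A^\times$), are: (i) $\psi(a)=-\phi(a)$ for all $a\in A$, via $w(1)E_{12}(a)w(1)^{-1}=E_{21}(-a)$; (ii) $\phi(u^2 a)=\phi(a)$, so $\phi$ kills the ideal $I:=\langle u^2-1:u\in A^\times\rangle$; (iii) $\phi(a)=\phi(a^{-1})$ for $a\in A^\times$, via the identity $E_{12}(a)E_{21}(-a^{-1})E_{12}(a)=E_{21}(-a^{-1})E_{12}(a)E_{21}(-a^{-1})$ combined with (i). These show $\phi:(A,+)\twoheadrightarrow\SL_2(A)^\ab$ is surjective with kernel containing $I$. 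For $|A/\mmm_A|\ge 4$, picking $u\in A^\times$ with $\bar u^2\ne 1$ gives $u^2-1\in A^\times$, so $I=A$ and $\SL_2(A)^\ab=0$. For $|A/\mmm_A|=3$, one checks $I=\mmm_A$ (using $u=-1+m$ for $m\in\mmm_A$, so $u^2-1=m(m-2)$ with $m-2$ a unit), and the resulting surjection $\F_3\twoheadrightarrow\SL_2(A)^\ab$ is injective by reduction to the classical $\SL_2(\F_3)^\ab\cong\z/3$.

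The main case is $|A/\mmm_A|=2$. Every unit is $u=1+m$ with $m\in\mmm_A$, so (ii) reads $2\phi(ma)+\phi(m^2 a)=0$; polarizing in $m$ yields $2\phi(m_1 m_2 a)=0$ for $m_1,m_2\in\mmm_A$ and $a\in A$. Separately, from $h(u)h(v)=h(uv)$ abelianized using $[h(u)]=3\phi(u-1)$ (a consequence of (i) and (iii)), one obtains $3\phi((u-1)(v-1))=0$, hence $3\phi(\mmm_A^2)=0$. The coprime torsion bounds force $\phi(\mmm_A^2)=0$, giving a surjection $A/\mmm_A^2\twoheadrightarrow\SL_2(A)^\ab$. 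For injectivity, functoriality of $\phi$ under $A\twoheadrightarrow A/\mmm_A^2$ reduces to the case $\mmm_A^2=0$. Then the congruence kernel $K$ of $\SL_2(A)\twoheadrightarrow\SL_2(\F_2)$ is abelian, identified with the trace-zero matrices in $M_2(\mmm_A)$, and the Hochschild--Serre five-term exact sequence, using $H_2(\SL_2(\F_2))=H_2(S_3)=0$, yields
\[
0 \to K_{\SL_2(\F_2)} \to \SL_2(A)^\ab \to \SL_2(\F_2)^\ab \to 0,
\]
with coinvariant term $\mmm_A$ and right term $\F_2$; tracing $\phi$ through these identifications gives the isomorphism $\phi:A\xrightarrow{\sim}\SL_2(A)^\ab$.

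The hardest step will be the $|A/\mmm_A|=2$ case, specifically resolving the Hochschild--Serre extension so the result matches $A/\mmm_A^2$ rather than the split $\mmm_A\oplus\F_2$. The key point is that $2\phi(1)=\phi(2)$ is nonzero in $\mmm_A$ precisely when $2\ne 0$ in $A$, which encodes the non-split extension for characteristic-$4$ rings like $\z/4$ (where $\SL_2(\z/4)^\ab\cong\z/4$) as opposed to characteristic-$2$ rings like $\F_2[t]/(t^2)$ (where $\SL_2(\F_2[t]/(t^2))^\ab\cong\F_2\oplus\F_2$).
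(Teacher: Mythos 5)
Your argument is correct, and it is worth noting that the paper does not actually prove Proposition~\ref{local} at all: its ``proof'' is a citation to \cite[Proposition~4.1]{B-E2024}. So your proposal is a genuinely self-contained alternative. The skeleton is sound throughout: the identities $w(1)E_{12}(a)w(1)^{-1}=E_{21}(-a)$, $h(u)E_{12}(a)h(u)^{-1}=E_{12}(u^2a)$ and $w(a)=E_{12}(a)E_{21}(-a^{-1})E_{12}(a)=E_{21}(-a^{-1})E_{12}(a)E_{21}(-a^{-1})$ do yield $\psi=-\phi$, $\phi(I)=0$ for $I=\lan u^2-1\ran$, and $[w(u)]=3\phi(u)$, so the case $|A/\mmm_A|\ge 4$ (where $u^2-1$ can be taken to be a unit, even for infinite residue fields) and the case $|A/\mmm_A|=3$ (where $I=\mmm_A$ and injectivity is detected by $\SL_2(\F_3)^\ab\simeq\z/3$) go through exactly as you say. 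In the residue-field-$\F_2$ case your polarization giving $2\phi(m_1m_2)=0$, combined with $3\phi((u-1)(v-1))=0$ from $[h(uv)]=[h(u)]+[h(v)]$, correctly kills $\phi(\mmm_A^2)$, and the reduction to $\mmm_A^2=0$ via $\SL_2(A)^\ab\arr\SL_2(A/\mmm_A^2)^\ab$ is legitimate. Two details deserve to be written out. First, the coinvariants claim: with $\mmm_A^2=0$ one has $K\simeq\mmm_A\otimes_{\F_2}\mathfrak{sl}_2(\F_2)$ with $\SL_2(\F_2)\simeq S_3$ acting through the adjoint representation, and a short computation with the generators $E_{12}(1),E_{21}(1)$ shows the relation submodule is the $2$-dimensional span of $e+h$ and $f+h$ (where $h$ is the identity matrix, which has trace $0$ over $\F_2$), so $(\mathfrak{sl}_2(\F_2))_{S_3}\simeq\F_2$ generated by the class of $e$; hence $K_{S_3}\simeq\mmm_A$ and the composite $\mmm_A\arr K\arr K_{S_3}$ is the identity. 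Second, your worry about ``resolving the extension'' is unnecessary: since $H_2(S_3)=0$ makes $K_{S_3}\arr\SL_2(A)^\ab$ injective, the map $\phi$ is a morphism from $0\arr\mmm_A\arr A\arr\F_2\arr 0$ to $0\arr K_{S_3}\arr\SL_2(A)^\ab\arr\SL_2(\F_2)^\ab\arr 0$ which is an isomorphism on the ends, so the five lemma gives $\phi:A\xrightarrow{\sim}\SL_2(A)^\ab$ directly, with no need to decide whether the extension splits (this also avoids any cardinality argument, which would fail if $\mmm_A$ were infinite). With those two points filled in, your proof is complete and arguably more informative than the citation, since it exhibits the isomorphism explicitly as $a\mapsto[E_{12}(a)]$.
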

\begin{proof}
See \cite[Proposition 4.1]{B-E2024}.
\end{proof}

The following result plays a key role in our analysis.

\begin{thm}[Vaserstein, Liehl]\label{VL}
Any Dedekind domain of arithmetic type with infinitely many units is a $\GE_2$-ring.
\end{thm}
\begin{proof}
See \cite[Theorem, page 321]{vas1972} and \cite[\S4]{liehl1981}. 
\end{proof}

\begin{lem}\label{surj-GE2}
Let  $f: A \arr B$ be a surjective homomorphism of commutative rings. If $B$ is a 
$\GE_2$-ring, then the natural map $f_\ast:\SL_2(A) \arr \SL_2(B)$ is surjective.
\end{lem}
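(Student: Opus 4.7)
The plan is to exploit the fact that each elementary matrix in $\SL_2(B)$ lifts trivially to an elementary matrix in $\SL_2(A)$ via the surjective map $f$. The whole argument is then a factorization-and-lift procedure, which only works because of the $\GE_2$ hypothesis on $B$: it is precisely this hypothesis that lets us express an arbitrary element of $\SL_2(B)$ as a product of generators that admit pointwise lifts.

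First I would pick an arbitrary $M \in \SL_2(B)$. Since $B$ is a $\GE_2$-ring, $\SL_2(B) = \Ee_2(B)$, so $M$ admits a factorization $M = \prod_{k=1}^{n} E_{i_k j_k}(b_k)$ with each factor of the form $E_{12}(b_k)$ or $E_{21}(b_k)$, $b_k \in B$. Next, using the surjectivity of $f$, I would choose preimages $a_k \in A$ with $f(a_k) = b_k$ for each $k$, and set
\[
\widetilde{M} \;:=\; \prod_{k=1}^{n} E_{i_k j_k}(a_k) \;\in\; \Ee_2(A) \;\subseteq\; \SL_2(A).
\]
The final step is to observe that $f_\ast$ acts entrywise, so it sends $E_{ij}(a) \mapsto E_{ij}(f(a))$ and is compatible with products; hence $f_\ast(\widetilde{M}) = \prod_k E_{i_k j_k}(f(a_k)) = \prod_k E_{i_k j_k}(b_k) = M$, proving surjectivity.

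There is no real obstacle in this argument: the two ingredients (factorization through elementary generators in $B$, and pointwise lifting of ring elements) are immediate from the hypotheses. The only conceptual point worth highlighting is that the $\GE_2$ assumption on $B$ is essential, because without a factorization through generators that are defined by a \emph{single} ring element each, pointwise lifting of $f$ would not suffice to produce a lift in $\SL_2(A)$.
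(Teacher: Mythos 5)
Your argument is correct and is essentially identical to the paper's own proof: both factor an arbitrary element of $\SL_2(B)$ into elementary matrices using the $\GE_2$ hypothesis, lift each elementary matrix by choosing a preimage of its defining ring element, and use that $f_\ast$ is a homomorphism carrying $E_{ij}(a)$ to $E_{ij}(f(a))$. No gaps; your write-up just spells out the details the paper leaves implicit.
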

\begin{proof}
Since $B$ is a $\GE_2$-ring, $\SL_2(B)$ is generated by the elementary matrices $E_{12}(b)$
and $E_{21}(b)$, $b\in B$. If $b=f(a)$, then $f_\ast(E_{ij}(a))=E_{ij}(b)$.
\end{proof}

Let $I$ be an ideal of $A$ and let $\pi:A \arr A/I$, $a \mapsto \bar{a}$, be the natural 
quotient map. Let $\Gamma(A,I)$ be the kernel of the natural map
\[
\pi_\ast:\SL_2(A) \arr \SL_2(A/I). 
\]
Hence
\[
\Gamma(A, I) :=\Bigg\{ {\mtxx a b c d} \in \SL_2(A) :b, c, a-1, d-1 \in I\Bigg\}.
\]

\begin{lem}\label{extension}
Let $I$ be an ideal of a ring $A$. If $A/I$ is a $\GE_2$-ring, then we have the extension
\[
1 \arr \Gamma(A,I) \arr \SL_2(A) \arr \SL_2(A/I) \arr 1.
\]
\end{lem}
\begin{proof}
Since $A/I$ is a $\GE_2$-ring, by Lemma \ref{surj-GE2}, the map $\pi_\ast:$ $\SL_2(A) \arr \SL_2(A/I)$
is surjective. By definition, the kernel of this map is $\Gamma(A,I)$, which gives us the desired extension.
\end{proof}

Let $A$ be a Dedekind domain of arithmetic type. A subgroup of the form $\Gamma(A,I)$, 
for some nontrivial ideal $I$, is called a {\it principal congruence subgroup} of $\SL_2(A)$. 
A subgroup of $\SL_2(A)$ is said to be a {\it congruence subgroup} if it contains a principal 
congruence subgroup. The following theorem is due to Serre \cite{serre1970}.

\begin{thm}[Congruence Subgroup Property for $\SL_2$, Serre]\label{csp}
Let $A$ be a Dedekind domain of arithmetic type with infinitely many units. Then any 
non-central normal subgroup $N$ of $\SL_2(A)$ is a congruence subgroup. In particular, 
the index of $N$ in $\SL_2(A)$ is finite.
\end{thm}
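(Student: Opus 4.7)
This is Serre's congruence subgroup theorem \cite{serre1970}; a full proof is extensive, but the strategy can be outlined.

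First, I would reformulate the CSP in terms of the \emph{congruence kernel}. Let $\widehat{\SL_2(A)}$ denote the profinite completion of $\SL_2(A)$ and $\overline{\SL_2(A)} := \varprojlim_{0 \ne I} \SL_2(A/I)$ the congruence completion, with natural surjection $\widehat{\SL_2(A)} \two \overline{\SL_2(A)}$ and kernel $C(A)$. The CSP is equivalent to $C(A)$ being central. Granting this, any non-central normal subgroup $N \se \SL_2(A)$ has open closure in $\overline{\SL_2(A)}$, so $N$ contains some $\Gamma(A, I)$. The finite-index claim then follows because $A/I$ is finite for every nonzero ideal $I$ of a Dedekind domain of arithmetic type (residue fields are finite and $A/I$ decomposes via the Chinese Remainder Theorem), hence $\SL_2(A)/\Gamma(A,I) \simeq \SL_2(A/I)$ is finite.

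Second, the structure of $C(A)$ is pinned down via Mennicke symbols: to each coprime pair $(a,b) \in A^2$ with $a \neq 0$ one attaches a symbol $\left[\begin{smallmatrix} a \\ b \end{smallmatrix}\right]_A$ satisfying multiplicativity in each argument and a reciprocity identity arising from elementary row and column operations in $\SL_2$. These identities reduce the determination of $C(A)$ to computing a universal Mennicke group. The key analytic input is Kneser's strong approximation theorem for $\SL_2$ over the global field $K$, which identifies $\overline{\SL_2(A)}$ with $\SL_2(\widehat{A})$, where $\widehat{A}$ is the profinite completion of $A$, and realizes the problem adelically so that the computation can be localized at each place.

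Third, the hypothesis $|S| \geq 2$ (equivalent by Theorem~\ref{dir} to $\aa$ being infinite) enters decisively through the conjugation identity $\diag(u,u^{-1})\, E_{12}(a)\, \diag(u^{-1},u) = E_{12}(u^2 a)$: with $\aa$ infinite, the $\SL_2(A)$-conjugates of a single elementary matrix appearing in a non-central normal $N$ additively span a nonzero ideal, forcing all $E_{12}(a)$ with $a \in I$ to lie in $N$ for some nonzero ideal $I$, and symmetrically producing $E_{21}(J) \se N$; commutators of these then generate a principal congruence subgroup inside $N$. The main obstacle is exactly this third step: without infinitely many units the CSP fails (most famously for $\SL_2(\z)$), so the delicate interplay between strong approximation at the finite primes outside $S$ and the infinitude of $\aa$ is the real content of the theorem and cannot be circumvented.
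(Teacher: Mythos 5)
The paper's own proof of this theorem is nothing more than the citation to Serre's Proposition~2 in \cite{serre1970}, which is exactly what you give, so you are taking the same route; your sketch of what that reference contains (the congruence kernel, Mennicke symbols, strong approximation, and the decisive use of conjugation by $\diag(u,u^{-1})$ made possible by the infinitude of $\aa$) is a fair description of Serre's strategy. One caution: centrality of the congruence kernel by itself only controls \emph{finite-index} subgroups, so the deduction in your first paragraph that an arbitrary non-central normal $N$ automatically has open closure in $\overline{\SL_2(A)}$ and hence contains some $\Gamma(A,I)$ is not formal --- the statement about all non-central normal subgroups requires the separate normal-subgroup argument that you only gesture at in your third paragraph.
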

\begin{proof}
See \cite[Proposition 2]{serre1970}.
\end{proof}

%%%%%%%%%%%%%%%%%%%%%%%%%%%%%%%%%%%%%%%%%%%%%%%%%%%%%%%%%%%%%%%%%%%%%%%%%%%%%%%%%%%%%%%%%%%%%%%%%%%%%%%%
\section{Abelianization of \texorpdfstring{$\SL_2$}{Lg} over Dedekind domains of arithmetic type}
%%%%%%%%%%%%%%%%%%%%%%%%%%%%%%%%%%%%%%%%%%%%%%%%%%%%%%%%%%%%%%%%%%%%%%%%%%%%%%%%%%%%%%%%%%%%%%%%%%%%%%%%

Let $K$ be an algebraic number field with ring of integers $\OO_K$. Let $S_2$, $S_2'$ 
and $S_3$ be the subsets of $\Spec(\OO_K)$ as defined in the introduction. The following 
theorem is the first main result of this paper.

\begin{thm}\label{Main}
If $A=\OO_{K,S}$ is a Dedekind domain of arithmetic type of characteristic zero with 
infinitely many units, then
\[
\SL_2(A)^\ab \simeq \bigoplus_{\qqq\in S_2\backslash S} \z/4 \oplus 
\bigoplus_{\qqq\in S_2'\backslash S} (\z/2\oplus \z/2) \oplus
\bigoplus_{\qqq\in S_3\backslash S} \z/3.
\]
In particular, $\SL_2(A)^\ab$ is of exponent dividing $12$.
\end{thm}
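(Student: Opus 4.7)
My plan is to identify a proper ideal $I \se A$ for which $\SL_2(A)^\ab \simeq \SL_2(A/I)^\ab$, and then compute the latter by applying Proposition~\ref{local} to the local factors of $A/I$ obtained via the Chinese remainder theorem. The natural candidate is the principal ideal $I = 12A$. Inside $A = \OO_{K,S}$ this ideal factors uniquely (primes of $S$ being inverted) as
\[
12 A = \prod_{\substack{\ppp \mid 2\\ \ppp \notin S}} \ppp^{2 e_\ppp} \cdot \prod_{\substack{\qqq \mid 3\\ \qqq \notin S}} \qqq^{e_\qqq}.
\]
By CRT, $A/12A$ is a finite product of local Artinian rings, hence semilocal, hence a $\GE_2$-ring. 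Lemma~\ref{surj-GE2} then gives a surjection $\SL_2(A) \two \SL_2(A/12A)$, inducing a surjection
\[
\SL_2(A)^\ab \two \SL_2(A/12A)^\ab \simeq \prod_\ppp \SL_2(A/\ppp^{n_\ppp})^\ab
\]
on abelianizations, using that $\SL_2$ and abelianization both commute with finite products.

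I next apply Proposition~\ref{local} to each local factor. For $\ppp \mid 2$ or $\qqq \mid 3$ whose residue field properly extends $\F_2$ or $\F_3$ the local contribution vanishes, so only the primes of $(S_2 \cup S_2' \cup S_3) \setminus S$ contribute. For $\ppp \in S_2 \cup S_2'$ the surviving factor is the additive group of $A/\ppp^2$: this is $\z/4$ when $\ppp \in S_2$ (as $e_\ppp = 1$ forces $2 \notin \ppp^2$) and $\z/2 \oplus \z/2$ when $\ppp \in S_2'$ (as $e_\ppp \ge 2$ forces $2 \in \ppp^{e_\ppp} \se \ppp^2$). For $\qqq \in S_3$ the factor is $A/\qqq \simeq \z/3$. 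Assembling these matches the right-hand side of the theorem exactly, so the RHS is exhibited as a quotient of $\SL_2(A)^\ab$.

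The remaining task --- and the main obstacle --- is to show this surjection is an isomorphism, equivalently that $\Gamma(A, 12A) \se [\SL_2(A), \SL_2(A)]$. My approach is to combine Serre's congruence subgroup property (Theorem~\ref{csp}) with explicit commutator identities. CSP furnishes \emph{some} ideal $J$ with $\Gamma(A, J) \se [\SL_2(A), \SL_2(A)]$: the commutator subgroup is normal in $\SL_2(A)$, and non-central (since otherwise $\PSL_2(A)$ would be abelian, contradicting the presence of non-commuting elementary matrices provided by Theorem~\ref{Vas}). The real work is to refine $J$ so that $12 A \se J$. For this I would exploit the relation $H(u) E_{12}(a) H(u)^{-1} = E_{12}(u^2 a)$ for $u \in \aa$, forcing $E_{12}((u^2-1)a) \equiv 1$ in $\SL_2(A)^\ab$, together with the Weyl conjugation $w E_{12}(a) w^{-1} = E_{21}(-a)$, the triangular additivity $E_{ij}(a) E_{ij}(b) = E_{ij}(a+b)$, and the $S$-unit theorem (Theorem~\ref{dir}) to supply enough units. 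Matching these combinatorial constraints to the arithmetic of $A$ at the primes above $2$ and $3$ is where I expect the bulk of the remaining work to lie.
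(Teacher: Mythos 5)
Your first half is sound and matches the paper's computation: $A/12A$ is semilocal, the Chinese remainder theorem splits it into the local rings $\OO_K/\ppp^{2e_\ppp}$ and $\OO_K/\qqq^{e_\qqq}$, and Proposition~\ref{local} together with your analysis of the additive group $\OO_K/\ppp^2$ identifies $\SL_2(A/12A)^{\ab}$ with the right-hand side of the theorem. The genuine gap is the injectivity step, which you correctly isolate but do not close, and the route you sketch for it does not work as stated. Theorem~\ref{csp} hands you \emph{some} nonzero ideal $J$ with $\Gamma(A,J)\se[\SL_2(A),\SL_2(A)]$, with no control over $J$; you then propose to ``refine $J$ so that $12A\se J$''. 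But enlarging $J$ enlarges $\Gamma(A,J)$, so this refinement is precisely the assertion $\Gamma(A,12A)\se[\SL_2(A),\SL_2(A)]$ you are trying to prove, and the commutator identities you list only show that the \emph{elementary} matrices $E_{ij}(b)$ with $b\in\langle u^2-1 : u\in\aa\rangle$ die in $\SL_2(A)^{\ab}$. To conclude you would additionally need that $\Gamma(A,12A)$ is generated, modulo $\Gamma(A,J)$ and commutators, by such elementary matrices --- a Vaserstein/Liehl-type generation statement for relative elementary subgroups that your sketch nowhere supplies and that is essentially as hard as the theorem itself.

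The paper sidesteps this entirely with a retraction trick. Do not insist on $I=12A$: take the unknown ideal $I$ given by Theorem~\ref{csp}, replace it by $6I$, and shrink further so that every exponent in its prime factorization is at least $2$ (shrinking only strengthens the containment $\Gamma(A,I)\se[\SL_2(A),\SL_2(A)]$). Because $\Gamma(A,I)$ lies in the commutator subgroup, the isomorphism $\SL_2(A)/\Gamma(A,I)\simeq\SL_2(A/I)$ yields a surjection $\SL_2(A/I)^{\ab}\two\SL_2(A)^{\ab}$; because $A/I$ is a $\GE_2$-ring, reduction also induces a map $\SL_2(A)^{\ab}\arr\SL_2(A/I)^{\ab}$, and the composite $\SL_2(A/I)^{\ab}\two\SL_2(A)^{\ab}\arr\SL_2(A/I)^{\ab}$ is the identity. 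Hence $\SL_2(A)^{\ab}\simeq\SL_2(A/I)^{\ab}$ with no need to identify $I$: your CRT-plus-Proposition~\ref{local} computation applies verbatim to this $I$, the factors at primes with residue field of order greater than $3$ vanish, and the surviving contributions at the primes over $2$ and $3$ are independent of the exponents once they are at least $2$. This yields the theorem without any commutator calculus.
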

\begin{proof}
The commutator subgroup of a group is always normal \cite[Chap.~I, \S12, Exercise~3]{lang2002}. 
Thus $[\SL_2(A), \SL_2(A)]$ is normal in $\SL_2(A)$. Moreover, $[\SL_2(A), \SL_2(A)]$ is non-central. 
In fact, the center of $\SL_2(A)$ is $\mu_2(A)I_2$. If the commutator subgroup is central,
then we have a natural surjective map 
\[
\SL_2(A)^\ab \two \PSL_2(A):=\SL_2(A)/\mu_2(A)I_2
\]
and, thus, $\PSL_2(A)$ must be abelian. But this is not true (over any ring where $1\neq 0$), 
because, in $\PSL_2(A)$, we have 
$\overline{E_{12}(1)}\ \overline{E_{21}(1)}\neq \overline{E_{21}(1)}\ \overline{E_{12}(1)}$. 
Thus, $[\SL_2(A), \SL_2(A)]$ is a non-central normal subgroup of 
$\SL_2(A)$. 

By Theorem \ref{csp}, $[\SL_2(A), \SL_2(A)]$ is a congruence subgroup. So it contains a principal 
congruence subgroup $\Gamma(A, I)$, for some non-trivial ideal $I$ of $A$. This yields 
the surjection
\begin{equation}\label{surj-0}
\SL_2(A)/\Gamma(A, I) \two \SL_2(A)/[\SL_2(A), \SL_2(A)] = \SL_2(A)^{\ab}.
\end{equation}

Since $A$ is a Dedekind domain of arithmetic type, by Lemma \ref{PFR}, $A/I$ is a finite ring.
By Corollary \ref{f-ring}, $A/I$ is a $\GE_2$-ring and so, by Lemma \ref{extension},
we have the isomorphism
\[
\SL_2(A)/\Gamma(A, I) \simeq \SL_2(A/I).
\]
From this and (\ref{surj-0}), we obtain the surjective map 
\[
\phi: \SL_2(A/I)\simeq \SL_2(A)/\Gamma(A, I) \two \SL_2(A)^\ab,
\]
which is given by 
\[
{\mtxx {\overline{a}} {\overline{b}} {\overline{c}} {\overline{d}}} \mapsto \overline{\mtxx a b c d}.
\]
Since $\SL_2(A)^\ab$ is abelian, under the above map, the commutator subgroup of $\SL_2(A/I)$, 
i.e. $[\SL_2(A/I),\SL_2(A/I)]$, maps to $1$ (see \cite[Chap.~I, \S12, Exercise~3]{lang2002}). 
Thus, we have the surjective homomorphism
\begin{equation} \label{surj}
\overline{\phi}: \SL_2(A/I)^{\ab} \two \SL_2(A)^{\ab}.
\end{equation}
Let $I':=6I$ and let $I'=\ppp_1^{\alpha_{\ppp_1}} \cdots \ppp_r^{\alpha_{\ppp_r}}$ be the prime 
decomposition of $I'$ in $A$. Thus, $\alpha_{\ppp_i}\geq 1$, for all $i$. Let 
\[
J:=(\ppp_1 \cdots \ppp_r)I'=\ppp_1^{\alpha_{\ppp_1+1}} 
\cdots \ppp_r^{\alpha_{\ppp_r+1}}\subseteq I'.
\]
Note that $\alpha_{\ppp_i}+1>1$, for all $i$, and so we may write
\[
J=\ppp_1^{\beta_{\ppp_1}} \cdots \ppp_r^{\beta_{\ppp_r}},
\]
where each $\beta_{\ppp_i}>1$. Since $J\se I$, we have the natural surjective map of finite rings 
$A/J \two A/I$ (see Lemma \ref{PFR}). By Corollary \ref{f-ring} and Lemma \ref{surj-GE2}, we have 
the natural surjective map 
\[
\psi: \SL_2(A/J) \two \SL_2(A/I).
\]
Clearly, the map
\begin{equation}\label{surj-1}
\overline{\psi}:\SL_2(A/J)^\ab \two  \SL_2(A/I)^\ab, \ \ \ 
\overline{X} \mapsto \overline{\psi(X)},
\end{equation}
is an epimorphism. From  (\ref{surj}) and (\ref{surj-1}), we obtain the epimorphism
\[
\overline{\phi}\circ \overline{\psi}:\SL_2(A/J)^{\ab} \two \SL_2(A)^{\ab}.
\]

On the other hand, by Corollary \ref{f-ring} and Lemma \ref{surj-GE2}, the natural map 
$\SL_2(A) \arr \SL_2(A/J)$ is surjective. It is straightforward to verify that the 
composition 
\[
\SL_2(A/J)^{\ab} \two \SL_2(A)^{\ab} \two \SL_2(A/J)^{\ab}
\]
is the identity map. Therefore, we conclude that
\[
\SL_2(A)^{\ab} \simeq \SL_2(A/J)^{\ab}.
\]

By the Chinese Remainder Theorem, 
$A/J \simeq A/\ppp_1^{\beta_{\ppp_1}} \times \cdots \times A/\ppp_r^{\beta_{\ppp_r}}$,
where each $A/\ppp_i^{\beta_{\ppp_i}}$ is a finite local ring. By Lemma \ref{A-B},
\[
\SL_2(A/J) \simeq \SL_2(A/\ppp_1^{\beta_{\ppp_1}}) \times \cdots \times 
\SL_2(A/\ppp_r^{\beta_{\ppp_r}}).
\]
Hence
\[
\SL_2(A)^{\ab} \simeq \SL_2(A/\ppp_1^{\beta_{\ppp_1}})^{\ab} 
\times \cdots \times \SL_2(A/\ppp_r^{\beta_{\ppp_r}})^{\ab}.
\]

Recall that $A=\OO_{K,S}$, the ring of $S$-integers, where $K$ is the field of fractions of 
$A$ and $S$ is a finite non-empty set of primes of $K$ containing all infinite primes. 

Let $\OO_K$ denote the ring of integers in $K$. There is a natural bijection between the
sets $\Spec(A)$ and $\Spec(\OO_K)\backslash S$. In fact, $ A=\OO_{K,S}=\SS^{-1}\OO_K$, where
$\mathcal{S}=\OO_K\backslash \bigcup_{\qqq\in\Spec(\OO_K)\backslash S}\qqq$ (see Section \ref{sec1}).

For a non-trivial prime ideal $\ppp\in \Spec(A)$, we denote its corresponding prime ideal 
in $\Spec(\OO_K)$ by $\qqq$. So $\qqq=\ppp\cap \OO_K$ and $\ppp=\SS^{-1}\qqq$ 
\cite[Proposition 3.16]{am1969}. From these and the properties of the localization functor 
(\cite[Proposition~3.3 and Proposition~3.11]{am1969}), we have 
\begin{align*}
A/\ppp_i^{\beta_{\ppp_i}} &\simeq (\mathcal{S}^{-1}\OO_K)/(\mathcal{S}^{-1}\qqq_i)^{\beta_{\ppp_i}}\\
&\simeq (\mathcal{S}^{-1}\OO_K)/(\mathcal{S}^{-1}(\qqq_i^{\beta_{\ppp_i}}))\\
& \simeq \mathcal{S}^{-1}(\OO_K/\qqq_i^{\beta_{\ppp_i}}).
\end{align*} 
%Observe that 
%\[
%A/\ppp_i\simeq \OO_K/\qqq_i,
%\]
%(see the 

Let $|A/\ppp_i|\geq 4$, for some $i$. The ring $A/\ppp_i^{\beta_{\ppp_i}}$ is local with the 
maximal ideal $\ppp_i/\ppp_i^{\beta_{\ppp_i}}$ and the quotient field $A/\ppp_i$. Then, 
by Proposition \ref{local}, we have 
\[
\SL_2(A/\ppp_i^{\beta_{\ppp_i}})^\ab=0.
\]
Therefore, we may assume that, for all $i$, either $A/\ppp_i\simeq\F_2$ or $A/\ppp_i\simeq\F_3$. 
Observe that if $\qqq\in S$, then clearly $\SS^{-1}\qqq=\SS^{-1}\OO_K$ 
\cite[Proposition 3.11(ii)]{am1969} and thus, for any $r\geq 1$, 
$(\mathcal{S}^{-1}\OO_K)/(\mathcal{S}^{-1}\qqq)^r=0$.

Combining all of the above results, we obtain the decomposition:
\begin{align*}
\SL_2(A)^\ab \simeq &\bigoplus_{\qqq\in S_2\backslash S} 
\SL_2(\mathcal{S}^{-1}(\OO_K/\qqq^{\beta_\ppp}))^{\ab} \oplus 
\bigoplus_{\qqq\in S_2'\backslash S} 
\SL_2(\mathcal{S}^{-1}(\OO_K/{\qqq}^{\beta_{\ppp}}))^{\ab} \\
&\oplus\bigoplus_{\qqq\in S_3\backslash S} 
\SL_2(\mathcal{S}^{-1}(\OO_K/\qqq^{\beta_\ppp}))^{\ab}.
\end{align*}
%where $\qqq=\ppp \cap \OO_K$, with $\ppp$ belonging to the set $\{\ppp_1, \dots, \ppp_r\}$.
Note that $\beta_\ppp>1$, for all $\ppp$, and have in mind that $\OO_K/\qqq\simeq A/\ppp$ 
(see the proof of Lemma \ref{PFR}). Now, we apply Proposition \ref{local}:\\
If $\OO_K/\qqq\simeq \F_3$, then
\[
\SL_2(\SS^{-1}(\OO_K/\qqq^{\beta_\ppp}))^{\ab}
\simeq \SL_2(A/\ppp^{\beta_\ppp})^{\ab}
\simeq (A/\ppp^{\beta_\ppp})/(\ppp/\ppp^{\beta_\ppp})
\simeq A/\ppp\simeq \z/3.
\]
If $\OO_K/\qqq\simeq \F_2$, then
\[
\SL_2(\SS^{-1}(\OO_K/\qqq^{\beta_\ppp}))^{\ab}
\simeq \SL_2(A/\ppp^{\beta_\ppp})^{\ab}
\simeq (A/\ppp^{\beta_\ppp})/(\ppp/\ppp^{\beta_\ppp})^2
\simeq A/\ppp^2.
\]
To complete the proof, it remains to study the additive group $A/\ppp^2$, for $\ppp=\SS^{-1}\qqq$, where
$\qqq\in S_2 \cup S_2'$.
Recall that for these primes, we have $A/\ppp\simeq \F_2$. Now, consider the exact sequence
\[
0 \arr \ppp/\ppp^2 \arr A/\ppp^2 \arr \F_2 \arr 0.
\]
Since $\ppp/\ppp^2$ is a principal ideal of $A/\ppp^2$, it is a one-dimensional $A/\ppp$-vector space. Hence 
$\ppp/\ppp^2\simeq \F_2$. It follows that $A/\ppp^2$ has order $4$.

Let $\qqq\in S_2'$. If $e_\qqq$ denotes the ramification index of $\qqq$ over the prime $2$, then
$e_\qqq>1$.  This implies that $2 \in \qqq^{e_\qqq}$ and so $2 \in \qqq^2$. This shows that $2 \in \ppp^2$.
Now, for any $x+\ppp^2\in A/\ppp^2$, we have
\[
2(x+\ppp^2)=(x+\ppp^2)+(x+\ppp^2)=2x+\ppp^2=\ppp^2.
\]
Thus, every element of the additive group $A/\ppp^2$ is of order at most two, so 
\[
A/\ppp^2\simeq\z/2\oplus\z/2.
\]
Now assume that $\qqq\in S_2$. Then $e_\qqq=1$. This implies that $2\notin \qqq^2$ 
%(\textcolor{blue}{I guess that it is $2 \notin \qqq^2$, since $2 \in \qqq$ by the 
%condition $\qqq$ over $2$, but the ramification index is $1$, so, follows 
%$2 \notin \qqq^2$, hence $2 \notin \ppp^2$ as in the sequence})
and so $2 \notin \ppp^2$. Hence
\[
2(1+\ppp^2)=2+\ppp^2 \neq \ppp^2.
\]
This shows that $1+\ppp^2$ has order $4$, and therefore  
\[
A/\ppp^2 \simeq \z/4.
\]
This completes the proof of the theorem.
\end{proof}

Let $K$ be a global field of positive characteristic, and let $\OO_K$ denote its ring of 
integers. Consider the subsets $S_2''$ and $S_3''$ of $\Spec(\OO_K)$, as defined in the 
introduction. We now state our second main result.

\begin{thm}\label{main2}
Let $A=\OO_{K,S}$ be a Dedekind domain of arithmetic type of positive characteristic $p$
such that $[K:\F_q(t)]<\infty$, where $q=p^r$ for some natural number $r$. If $A$ has infinitely many 
units, then
\[
\SL_2(A)^\ab \simeq \begin{cases}
\bigoplus_{\qqq\in S_2''\backslash S}(\z/2\oplus \z/2) &  \text{if $q=2$}\\
\bigoplus_{\qqq\in S_3''\backslash S} \z/3  & \text{if $q=3$.}\\
0 & \text{if $q\geq 4$}
\end{cases}
\]
In particular, $\SL_2(A)^\ab$ is of exponent dividing $6$.
\end{thm}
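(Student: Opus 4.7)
The plan is to follow the strategy of Theorem \ref{Main}, with the only genuine modification appearing in the final local computation. The Congruence Subgroup Property (Theorem \ref{csp}) gives a nontrivial ideal $I$ of $A$ with $\Gamma(A,I)\subseteq [\SL_2(A),\SL_2(A)]$, so the quotient $\SL_2(A/I)$ of $\SL_2(A)$ surjects onto $\SL_2(A)^\ab$, producing $\SL_2(A/I)^\ab\two\SL_2(A)^\ab$. Since $A/I$ is a finite product of Artinian local rings and hence a $\GE_2$-ring, Lemma \ref{surj-GE2} makes $\SL_2(A)\two\SL_2(A/I)$ surjective, and the same composition-is-identity argument used in Theorem \ref{Main} yields $\SL_2(A)^\ab\simeq\SL_2(A/I)^\ab$. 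I then replace $I$ by a suitable smaller ideal---for instance $(I\cdot J)^2$, where $J$ is the product of all primes of $A$ lying in $S_2''\backslash S$ or $S_3''\backslash S$---so that I may assume simultaneously that every prime in the factorization of $I$ appears with exponent at least $2$ and that every prime in $S_2''\backslash S$ or $S_3''\backslash S$ is included (the extra factor $J$ is irrelevant when $q>3$).

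Applying the Chinese Remainder Theorem together with the identification $A/\ppp^{\alpha_\ppp}\simeq\OO_K/\ppp^{\alpha_\ppp}$ for each $\ppp\in\Spec(A)$, I split $\SL_2(A/I)^\ab$ as a finite product of local abelianizations $\SL_2(\OO_K/\ppp^{\alpha_\ppp})^\ab$. Proposition \ref{local} forces each factor to vanish unless $|\OO_K/\ppp|\in\{2,3\}$. Since $\F_q\subseteq A$ and hence $\F_q\subseteq\OO_K/\ppp$ for every prime $\ppp$, the case $q>3$ eliminates all factors and immediately yields $\SL_2(A)^\ab=0$. When $q=3$, only the primes in $S_3''\backslash S$ survive, each contributing $\OO_K/\ppp\simeq\z/3$; and when $q=2$, only the primes in $S_2''\backslash S$ survive, each contributing the additive group $\OO_K/\ppp^2$.

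The only step that genuinely differs from the characteristic zero argument is the identification of $\OO_K/\ppp^2$ when $\char(A)=2$. Here $2=0$ in $\OO_K$, so $\OO_K/\ppp^2$ has exponent $2$; and the short exact sequence
\[
0 \arr \ppp/\ppp^2 \arr \OO_K/\ppp^2 \arr \OO_K/\ppp \arr 0,
\]
whose outer terms are both $\F_2$, forces $|\OO_K/\ppp^2|=4$ and hence $\OO_K/\ppp^2\simeq\z/2\oplus\z/2$. Notice that no analogue of the $S_2$ versus $S_2'$ dichotomy from Theorem \ref{Main} arises here: in characteristic $2$ the element $2$ already lies in every power of $\ppp$, so the ramification distinction collapses entirely. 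The main obstacle I anticipate is merely the bookkeeping required to verify that $S_2''$ and $S_3''$ are finite and can be absorbed into $I$ by multiplication; this follows at once from the fact that each such prime lies over one of the linear polynomials $t-a\in\F_q[t]$, of which there are only $q$.
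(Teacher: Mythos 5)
Your proposal is correct and follows essentially the same route as the paper: the paper's proof of Theorem \ref{main2} simply says it ``proceeds similarly to Theorem \ref{Main}'' and isolates as the key point exactly what you isolate, namely that in characteristic $2$ the ring $\OO_K/\ppp^2$ is a two-dimensional $\F_2$-vector space, so the $S_2$ versus $S_2'$ ramification dichotomy collapses and every surviving $2$-adic factor is $\z/2\oplus\z/2$. Your extra care in replacing $I$ by a multiple of the (finite) product of the primes in $S_2''\backslash S$ or $S_3''\backslash S$ --- rather than by $6I$, which is unavailable in positive characteristic --- is a detail the paper leaves implicit, and you handle it correctly.
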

\begin{proof}
The proof proceeds similarly to that of Theorem~\ref{Main}. The argument up to (\ref{surj})
is the same. Then we take 
\[
I':=\begin{cases}
t(t-1)I & \text{if $q=2$}\\
t(t-1)(t-2)I & \text{if $q=3$}\\
I & \text{if $q\geq 4$}
\end{cases}
\]
and define $J$ in a similar way. If $J=\ppp_1^{\beta_{\ppp_1}} \cdots \ppp_r^{\beta_{\ppp_r}}$
is the prime decomposition of $J$, then 
\[
\SL_2(A)^{\ab} \simeq \SL_2(A/J)^{\ab} \simeq \SL_2(A/\ppp_1^{\beta_{\ppp_1}})^{\ab} 
\times \cdots \times \SL_2(A/\ppp_r^{\beta_{\ppp_r}})^{\ab}.
\]
Note that $\beta_{\ppp_i}>1$ for all $i$. The remainder of the proof of Theorem~\ref{main2} follows in the 
same way. Observe that for any nontrivial prime ideal $\ppp$ of $A$, the field $\F_q$ embeds 
into $A/\ppp$, and hence $|A/\ppp|\geq q$. Moreover, if $|A/\ppp|\geq 4$, then for every positive 
integer $\beta$, $\SL_2(A/\ppp^\beta)^\ab=0$. Let 
\[
\qqq_i:=\OO_K\cap \ppp_i.
\]

If $q\geq 4$, then $\SL_2(A/\ppp_i^{\beta_{\ppp_i}})^\ab=0$, for any $1\leq i\leq r$. Hence 
\[
\SL_2(A)^\ab=0.
\]

If $q=3$, then, as in the proof of Theorem \ref{Main}, we have
\begin{align*}
\SL_2(A)^\ab \simeq \bigoplus_{\qqq\in S_3''\backslash S} 
\SL_2(\mathcal{S}^{-1}(\OO_K/\qqq^{\beta_\ppp}))^{\ab} 
\simeq \bigoplus_{\qqq\in S_3''\backslash S} \z/3.
\end{align*}

If $q=2$, then 
\begin{align*}
\SL_2(A)^\ab \simeq \bigoplus_{\qqq\in S_2''\backslash S} 
\SL_2(\mathcal{S}^{-1}(\OO_K/\qqq^{\beta_\ppp}))^{\ab} 
\simeq \bigoplus_{\qqq\in S_2''\backslash S} A/\ppp^2.
\end{align*}
In this case, $A/\ppp^2$ is a two-dimensional $\F_2$-vector space of dimension $2$. Consequently, 
as abelian groups, we have an isomorphism
\[
A/\ppp^2 \simeq \z/2\oplus \z/2.
\]
This completes the proof of the theorem.
\end{proof}

\begin{exa}
The ring $A := \z[\sqrt[3]{5}]$ is the ring of integers of $K=\q(\sqrt[3]{5})$ 
(see \cite[Example~3.9]{keune2023}). There are two infinite primes in $K$; one associated
to the real embedding of $K$ and another one associated to two complex embeddings of $K$.
Hence $A=\OO_K=\OO_{K, S}$, where $S$ is the set of two infinite primes of $K$.
It follows from Theorem \ref{dir} that $A^{\times}$ is of rank $1$, so it is infinite.
Hence, by Theorem \ref{Main}, we can determine the structure of $\SL_2(A)^{\ab}$. Let 
$\alpha := \sqrt[3]5$. We have the prime decompositions: $2A=\ppp_1\ppp_2$ and $3A=\ppp^3$, 
where
\[
\ppp_1=(2,1+\alpha), \ \ \ \ppp_2=(2,1+\alpha+\alpha^2), \ \ \ \ppp=(3,1+\alpha)
\]
(see \cite[Example 3.9]{keune2023}). 
Thus, $S_3=\{\ppp\}$. Let $f_1$ and $f_2$ denote the inertia degrees of $\ppp_1$ and 
$\ppp_2$, respectively. It is easy to verify that $f_1=1$ and $f_2=2$. Hence we have 
$S_2^{\prime}=\varnothing$, and  $S_2=\{\ppp_1\}$. Applying Theorem~\ref{Main}, 
we conclude that
\[
\SL_2(\z[\sqrt[3]{5}])^{\ab} \simeq \z/4 \oplus \z/3\simeq \z/12.
\]
\end{exa}

\begin{exa}\label{Z1-n}
Let $n>1$ be an integer. It is known that $A:=\z[1/n]$ is a Euclidean domain. Let $p$ be a prime number. 
If $p\mid n$, then $pA=A$, while if $p\nmid n$, the ideal $pA$ is a non-zero prime ideal of $A$. 
Thus,
\[
S_2=\begin{cases}
\{2A\} & \text{if $2\nmid n$}\\
\varnothing & \text{if $2\mid n$}
\end{cases}, \ \ \ \ S_2'=\varnothing, \ \ \ \ 
S_3=\begin{cases}
\{3A\} & \text{if $3\nmid n$}\\
\varnothing & \text{if $3\mid n$}
\end{cases}.
\]
Observe that, $A=\OO_{\q,S}$, where $S=\{|\ |, (p_1), \dots, (p_k)\}$ and $n=p_1^{r_1}\cdots p_k^{r_k}$ 
is the prime decomposition of $n$. Applying Theorem \ref{Main}, we obtain
\[
\SL_2(\z[1/n])^\ab \simeq 
\begin{cases}
0   & \text{if $2\mid n$, $3\mid n$}  \\
\z/3   & \text{if $2\mid n$, $3\nmid n$}\\
\z/4   & \text{if $2\nmid n$, $3\mid n$}\\
\z/12 &  \text{if $2\nmid  n$, $3\nmid n$}
\end{cases}.
\]
This result was previously proved independently, using different and more elementary methods, in 
\cite[Theorem~1.2]{carl2024} and \cite[Proposition~4.4]{B-E2024}.
\end{exa}

The present paper grew out of our desire to generalize the isomorphism of Example~\ref{Z1-n} to the 
ring of integers of quadratic fields. This is the topic of the next section.

%%%%%%%%%%%%%%%%%%%%%%%%%%%%%%%%%%%%%%%%%%%%%%%%%%%%%%%%%%%%%%%%%%%%%%%%%%%%%%%%%%%%%%%%%%%%%%%%%%%%
\section{Abelianization of \texorpdfstring{$\SL_2$}{Lg} over rings of integers of quadratic fields}
%%%%%%%%%%%%%%%%%%%%%%%%%%%%%%%%%%%%%%%%%%%%%%%%%%%%%%%%%%%%%%%%%%%%%%%%%%%%%%%%%%%%%%%%%%%%%%%%%%%%

Let $K$ be a quadratic field, i.e., $[K:\q]=2$. Then $K=\q(\sqrt{d})$, for some square-free integer $d$.
Let $\OO_d$ be the ring of integers of $\q(\sqrt{d})$. Let $p\in\N$ be a prime and consider the 
ideal $p\OO_d$ in $\OO_d$. Then
\begin{itemize}
\item[(i)]  $p$ is called {\it inert} in $\OO_d$ if $p\OO_d$ remains a prime ideal in $\OO_d$;
\item[(ii)] $p$ is called {\it split} in $\OO_d$ if $p\OO_d$ is the product of two distinct prime ideals;
\item[(iii)] $p$ is called {\it ramified} in $\OO_d$ if $p\OO_d$ equals the square of a prime ideal.
\end{itemize}

\begin{prp}[{Corollary to \cite[Theorem 3.7]{{keune2023}}}] \label{2-d}
Let $K=\q(\sqrt{d})$ be a quadratic field, where $d$ is a square free integer. Then
\par {\rm (i)} $2$ is inert if $d\equiv 5 \pmod 8$, split if $d\equiv 1 \pmod 8$ 
and ramified otherwise.
\par {\rm (ii)} $3$ is inert if $d\equiv 2 \pmod 3$, split if $d\equiv 1 \pmod 3$
and ramified if $d\equiv 0 \pmod 3$.
\end{prp}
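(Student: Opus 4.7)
The plan is to apply the Kummer--Dedekind factorization theorem to $p\OO_d$, using a single generator of $\OO_d$ over $\z$ in each case. Recall that $\OO_d = \z[\sqrt{d}]$ when $d\equiv 2,3 \pmod 4$, with minimal polynomial $f(x)=x^2-d$, whereas $\OO_d=\z[\omega]$ with $\omega=(1+\sqrt{d})/2$ and minimal polynomial $g(x)=x^2-x+(1-d)/4$ when $d\equiv 1 \pmod 4$. Because in each case $\OO_d$ is generated as a $\z$-algebra by a single element, the Kummer--Dedekind theorem applies without index obstruction, and the splitting type of $p\OO_d$ matches the factorization type of the defining polynomial in $\F_p[x]$.

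For part (i), I would split into cases according to $d \pmod 8$, which is sufficient because $d$ is square-free, so $d \pmod 8\in\{1,2,3,5,6,7\}$. When $d\equiv 1 \pmod 8$ we have $(1-d)/4$ even, so $g(x)\equiv x(x+1) \pmod 2$ with distinct factors: $2$ splits. When $d\equiv 5 \pmod 8$ we have $(1-d)/4$ odd, so $g(x)\equiv x^2+x+1 \pmod 2$, irreducible over $\F_2$: $2$ is inert. When $d\equiv 3,7 \pmod 8$ we have $\OO_d=\z[\sqrt{d}]$ and $f(x)\equiv x^2-1\equiv (x+1)^2 \pmod 2$, so $2$ ramifies. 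When $d\equiv 2,6 \pmod 8$ we have $f(x)\equiv x^2 \pmod 2$, again ramified.

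For part (ii), the behavior of $3$ depends on the Legendre symbol of $d \pmod 3$, but I would again split on $d \pmod 4$ to select the correct generator. If $d\equiv 2,3 \pmod 4$, then $f(x)=x^2-d$ reduces mod $3$ to $x^2$ (when $3\mid d$, ramified), to $(x-1)(x+1)$ (when $d\equiv 1 \pmod 3$, split, and here $1\not\equiv -1 \pmod 3$), or to $x^2+1$ (when $d\equiv 2 \pmod 3$, irreducible over $\F_3$, inert). If $d\equiv 1 \pmod 4$, then the discriminant of $g(x)$ equals $d$, so $g$ mod $3$ has a repeated root iff $3\mid d$ (ramified), splits iff $d$ is a nonzero square mod $3$, i.e.\ $d\equiv 1 \pmod 3$, and is irreducible iff $d$ is a non-square mod $3$, i.e.\ $d\equiv 2 \pmod 3$. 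In both cases the three residue classes of $d \pmod 3$ produce the asserted behavior.

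The only subtlety, and hence the only ``obstacle,'' is the bookkeeping: one must pick the correct generator of $\OO_d$ according to $d \pmod 4$ before reducing modulo $p$, otherwise the naive recipe with $\sqrt{d}$ fails precisely when $p=2$ and $d\equiv 1 \pmod 4$. Once this is kept straight, the computations are mechanical and yield the stated classification for both $p=2$ and $p=3$.
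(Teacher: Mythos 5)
Your proof is correct. It is, however, a genuinely more self-contained route than the paper's: the paper disposes of part (i) entirely by citing the standard splitting criterion for quadratic fields (Keune, Theorem~3.7) and proves part (ii) by reducing to the quadratic-residue test from that same theorem ($3$ splits iff $d$ is a nonzero square mod $3$, etc.). You instead reprove the criterion from scratch via Kummer--Dedekind, choosing the generator $\sqrt{d}$ or $(1+\sqrt{d})/2$ according to $d \bmod 4$ and factoring the minimal polynomial modulo $2$ and $3$. Your computations check out: $(1-d)/4$ is even iff $d\equiv 1\pmod 8$, giving $x(x+1)$ versus the irreducible $x^2+x+1$ over $\F_2$; the discriminant of $x^2-x+(1-d)/4$ is indeed $d$, so the mod-$3$ behavior is governed by $d \bmod 3$ exactly as claimed; and the remark that the index obstruction is absent because $\OO_d$ is monogenic over $\z$ is the right justification for applying Kummer--Dedekind. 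What your approach buys is independence from the cited reference and an explicit treatment of the one genuinely delicate case ($p=2$ with $d\equiv 1\pmod 4$, where the naive $x^2-d$ recipe fails); what it costs is the extra casework on $d \bmod 8$ and $d \bmod 4$ that the paper avoids by quoting the general statement.
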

\begin{proof}
For (i), see \cite[Theorem 3.7]{keune2023}.
If $d \equiv 0 \pmod 3$ , then $3 \mid d$, and thus $3$ ramifies by \cite[Theorem~3.7]{keune2023}. 
Now, suppose that $3 \nmid d$. Then $d \equiv n^2 \pmod 3$ if and only if $d \equiv 1 \pmod 3$. 
Hence, by \cite[Theorem 3.7]{keune2023}, the prime $3$ splits if and only if $d \equiv 1 \pmod 3$. 
Finally, $3$ is inert if and only if $d \equiv 2 \pmod 3$.   
\end{proof}

\begin{thm}\label{d>0}
Let $K=\q(\sqrt{d})$ be a quadratic field, where $d$ is a square-free positive integer. 
If $\OO_d$ is the ring of integers of $K$, then
\[
\SL_2(\mathcal{O}_d)^\ab\simeq \begin{cases}
\ 0 & \text{if $d\equiv 5 \pmod  {24}$}\\
(\z/4)^2 \oplus (\z/3)^2 &\text{if $d\equiv 1 \pmod  {24}$}\\
(\z/4)^2 \oplus \z/3 &\text{if $d\equiv 9 \pmod  {24}$}\\
(\z/3)^2 & \text{if $d\equiv 13 \!\!\!\pmod  {24}$}\\
\ \z/3 &\text{if $d\equiv 21 \!\!\!\pmod  {24}$}\\
(\z/4)^2 &\text{if $d\equiv 17 \!\!\!\pmod  {24}$}\\
(\z/2)^2  &\text{if $d\equiv 2, 11, 14, 20, 23 \!\!\!\pmod  {24}$}\\
(\z/2)^2 \oplus (\z/3)^2 &\text{if $d\equiv 4,7,10,19, 22 \pmod  {24}$}\\
(\z/2)^2 \oplus \z/3 &\text{otherwise.}
\end{cases}
\]
\end{thm}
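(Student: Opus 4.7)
The strategy is to apply Theorem \ref{Main} directly to $A = \OO_d$ and then translate the resulting count of primes into congruence conditions on $d$. Since $d > 0$, the field $K = \q(\sqrt{d})$ is real quadratic, so by the $S$-unit Theorem \ref{dir} the unit group $\OO_d^\times$ has free rank $1$, in particular is infinite. Writing $\OO_d = \OO_{K, S_\infty}$ with $S_\infty$ the set of archimedean places, $S_\infty$ contains no finite primes, so $S_2 \setminus S_\infty = S_2$, $S_2' \setminus S_\infty = S_2'$, and $S_3 \setminus S_\infty = S_3$. Theorem \ref{Main} therefore gives
\[
\SL_2(\OO_d)^{\ab} \simeq (\z/4)^{|S_2|} \oplus (\z/2 \oplus \z/2)^{|S_2'|} \oplus (\z/3)^{|S_3|}.
\]

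The next step is to read off $|S_2|$, $|S_2'|$, $|S_3|$ from the splitting types of the rational primes $2$ and $3$ in $\OO_d$. The fundamental identity $\sum_i e_{\ppp_i}f_{\ppp_i} = [K:\q] = 2$ forces exactly three possibilities at each prime $p \in \{2,3\}$: $p$ splits (two primes above $p$, each with $e=f=1$ and residue field $\F_p$), ramifies (one prime with $e=2$, $f=1$, residue field $\F_p$), or is inert (one prime with $e=1$, $f=2$, residue field $\F_{p^2}$). Matching these against the definitions of $S_2$, $S_2'$, and $S_3$ yields $|S_2|=2$ if $2$ splits, $|S_2'|=1$ if $2$ ramifies, and both are $0$ if $2$ is inert; likewise $|S_3|=2,1,0$ according as $3$ splits, ramifies, or is inert. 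Thus the contribution of the prime $2$ is $(\z/4)^2$, $\z/2\oplus\z/2$, or $0$, and that of $3$ is $(\z/3)^2$, $\z/3$, or $0$, in the three respective cases.

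Proposition \ref{2-d} now reduces these splitting types to congruence conditions: the behavior of $2$ is determined by $d \pmod 8$, and the behavior of $3$ by $d \pmod 3$. Combining by the Chinese Remainder Theorem, and using that a square-free $d$ never satisfies $d \equiv 0,4 \pmod 8$, one matches each of the nine combinations of splitting types to a disjoint union of residue classes modulo $24$, producing the nine lines of the table. For instance, $(2\text{ split}, 3\text{ split})$ corresponds to $d \equiv 1 \pmod{24}$ and gives $(\z/4)^2 \oplus (\z/3)^2$; the ``otherwise'' case is $(2\text{ ramified}, 3\text{ ramified})$, i.e.\ $d \equiv 3, 6, 15, 18 \pmod{24}$, and yields $\z/2 \oplus \z/2 \oplus \z/3$.

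There is no real mathematical obstacle here; the only point requiring care is the combinatorial bookkeeping modulo $24$, and the observation that the residues $d \equiv 4, 20 \pmod{24}$ listed in the statement are in fact vacuous for square-free $d$ (both force $d \equiv 4 \pmod 8$, hence $4 \mid d$), and are included only for aesthetic completeness of the table.
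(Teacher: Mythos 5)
Your proposal is correct and follows essentially the same route as the paper: infinitude of $\OO_d^\times$ via the $S$-unit theorem (two archimedean places), Theorem \ref{Main} applied with $S$ consisting only of infinite primes, the three splitting types of $2$ and $3$ read off from the fundamental identity, and Proposition \ref{2-d} plus CRT to convert to residue classes modulo $24$. Your added remark that the classes $d\equiv 4, 20 \pmod{24}$ are vacuous for square-free $d$ is accurate and a nice touch, though not needed for the argument.
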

\begin{proof}
First, note that $\OO_d=\OO_{K,S}$, where $S$ consists of two infinite primes, associated to two
real embeddings of $K$. Hence, by
Theorem \ref{dir}, the unit group $\OO_d^\times$ is infinite. By Theorem \ref{Main}, we obtain 
an isomorphism $\SL_2(\OO_d)^\ab\simeq G_1\oplus G_2$, where
\[
G_1= \begin{cases}
\ 0 & \text{if $2$ is inert}\\
\z/4\oplus \z/4 & \text{if $2$ is split}\\
\z/2\oplus \z/2 & \text{if $2$ is ramified,}
\end{cases}
\ \
G_2= \begin{cases}
\ 0 & \text{if $3$ is inert}\\
\z/3\oplus \z/3 & \text{if $3$ is split}\\
\z/3 & \text{if $3$ is ramified.}
\end{cases}
\]
Now, the claim follows directly from Proposition~\ref{2-d}.
\end{proof}

The case where $K=\q(\sqrt{-d})$ differs significantly.
In this setting, the ring of integers $\OO_{-d}$ contains only finitely many units. 
In most cases, it is even not a $\GE_2$-ring. The ring $\OO_{-d}$ is a $\GE_2$-ring
if and only if $d=1,2,3,7,11$ \cite[Theorem 6.1]{cohn1966}. The structure of the 
abelianization $\SL_2(\OO_{-d})^\ab$ has been explicitly computed by Cohn for 
these values of $d$ (see \cite[page 162]{cohn1968} and \cite[Proposition 4.7]{B-E2024}). 
For additional values of $d$, further results can be found in \cite{swan1971}, 
\cite{rahm2013}, and \cite{rahm-2013}. However, the structure of 
$\SL_2(\OO_{-d})^\ab$ remains unknown for many values of $d$.
%and \cite{rf2011}.

Now, let $S$ be a set of primes containing at least two elements, including the infinite prime. 
In this setting, Theorem~\ref{Main} can be applied to compute $\SL_2(\OO_{-d, S})^\ab$. If the
prime $2$ splits in $\mathcal{O}_{-d}$, then it factors as $2\mathcal{O}_{-d}=\ppp_1\ppp_2$; 
if it is ramified, we have $2\mathcal{O}_{-d}=\ppp^2$. Similarly, if $3$ splits, then 
$3\mathcal{O}_{-d}=\qqq_1\qqq_2$, and if it is ramified, $3\mathcal{O}_{-d}=\qqq^2$. Let
\begin{align*}
&\beta_{\ppp_i}:=
\begin{cases}
0 & \text{if } \ppp_i \in S\\
1 & \text{ if } \ppp_i \notin S
\end{cases},
\ \ \ \ \     
\beta_{\ppp}:=
\begin{cases}
0 & \text{if } \ppp \in S\\
1 & \text{ if } \ppp \notin S
\end{cases},\\
&\beta_{\qqq_i}:=
\begin{cases}
0 & \text{if } \qqq_i \in S\\
1 & \text{ if } \qqq_i \notin S
\end{cases},
\ \ \ \ \ 
\beta_{\qqq}:=
\begin{cases}
0 & \text{if } \qqq \in S\\
1 & \text{ if } \qqq \notin S
\end{cases}.
\end{align*}
These notations allow us to derive a result analogous to Theorem~\ref{d>0}.

\begin{thm}\label{d<0}
Let $K=\q(\sqrt{-d})$, where $-d$ is a square-free negative integer, and let $\mathcal{O}_{-d}$
denote the ring of integers of $K$. Let $S$ be a finite set of primes of $\mathcal{O}_{-d}$
containing at least two elements, including the infinite prime. Then 
\[
\SL_2(\mathcal{O}_{-d,S})^\ab \simeq \begin{cases}
\ \ 0 & \text{if $-d\equiv 5 \pmod  {24}$}\\
(\z/4)^{\beta_{\ppp_1}+\beta_{\ppp_2}}
%\oplus (\z/4)^{\beta_{\ppp_2}} 
\oplus (\z/3)^{\beta_{\qqq_1}+\beta_{\qqq_2}} 
%\oplus (\z/3)^{\beta_{\qqq_2}} 
&\text{if $-d\equiv 1 \pmod  {24}$}\\
(\z/4)^{\beta_{\ppp_1}+\beta_{\ppp_2}}
%{\beta_{\ppp_1}}\oplus (\z/4)^{\beta_{\ppp_2}} 
\oplus (\z/3)^{\beta_{\qqq}} &\text{if $-d\equiv 9 \pmod  {24}$}\\
(\z/3)^{\beta_{\qqq_1}+\beta_{\qqq_2}} 
%\oplus (\z/3)^{\beta_{\qqq_2}} 
& \text{if $-d\equiv 13 \!\!\!\pmod  {24}$}\\
(\z/3)^{\beta_{\qqq}} &\text{if $-d\equiv 21 \!\!\!\pmod  {24}$}\\
(\z/4)^{\beta_{\ppp_1}+\beta_{\ppp_2}} 
%\oplus (\z/4)^{\beta_{\ppp_2}} 
&\text{if $-d\equiv 17 \!\!\!\pmod  {24}$}\\
(\z/2)^{2\beta_{\ppp}}  &\text{if $-d\equiv 2, 11, 14, 20, 23 \!\!\!\!\!\pmod  {24}$}\\
(\z/2)^{2\beta_{\ppp}} \oplus (\z/3)^{\beta_{\qqq_1}+\beta_{\qqq_2}} 
%\oplus (\z/3)^{\beta_{\qqq_2}} 
&\text{if $-d\equiv 4,7,10,19, 22 \! \pmod  {24}$}\\
(\z/2)^{2\beta_{\ppp}} \oplus (\z/3)^{\beta_{\qqq}} &\text{otherwise.}
\end{cases}
\]
\end{thm}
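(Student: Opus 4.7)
The plan is to run the same argument as the proof of Theorem~\ref{d>0}, observing that the hypothesis $|S|\geq 2$ (with the infinite prime in $S$) is exactly what is needed in order to invoke Theorem~\ref{Main}. Indeed, Theorem~\ref{dir} gives $\OO_{-d,S}^\times \simeq \mu(\OO_{-d,S}) \oplus \z^{|S|-1}$, which is infinite as soon as $|S|\geq 2$. Thus $\OO_{-d,S}$ is a Dedekind domain of arithmetic type of characteristic zero with infinitely many units, and Theorem~\ref{Main} yields
\[
\SL_2(\OO_{-d,S})^\ab \simeq \bigoplus_{\ppp \in S_2\setminus S}\z/4 \,\oplus\, \bigoplus_{\ppp \in S_2'\setminus S}(\z/2\oplus\z/2) \,\oplus\, \bigoplus_{\qqq \in S_3\setminus S}\z/3,
\]
where the sets $S_2,S_2',S_3\subseteq \Spec(\OO_{-d})$ are as defined in the introduction.

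Next, I would translate the splitting behavior of the rational primes $2$ and $3$ in $\OO_{-d}$ into statements about membership in $S_2$, $S_2'$, $S_3$. A prime $\ppp$ above $2$ lies in $S_2\cup S_2'$ exactly when its inertia degree is $1$, i.e., when $2$ is not inert in $\OO_{-d}$; the distinction between $S_2$ and $S_2'$ is detected by $e_\ppp=1$ versus $e_\ppp>1$, which corresponds to $2$ being split versus ramified. Concretely: if $2$ is inert, no prime above $2$ contributes; if $2$ splits as $\ppp_1\ppp_2$, both primes lie in $S_2$ and each contributes a $\z/4$ weighted by $\beta_{\ppp_i}$; if $2$ ramifies as $\ppp^2$, the unique prime lies in $S_2'$ and contributes $\z/2\oplus\z/2$ weighted by $\beta_\ppp$. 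The same trichotomy applies to $3$ and $S_3$, giving copies of $\z/3$. Proposition~\ref{2-d} converts these splitting conditions into congruences: $-d \pmod 8$ controls the behavior of $2$, while $-d \pmod 3$ controls the behavior of $3$.

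Finally, I would combine the two congruences via the Chinese Remainder Theorem, noting that $-d \pmod{24}$ simultaneously determines $-d \pmod 8$ and $-d \pmod 3$. Running through the nine combinations of (inert/split/ramified) for $2$ and (inert/split/ramified) for $3$ produces the nine cases in the statement, where inert primes contribute $0$, split primes contribute two weighted summands (indexed by the $\beta$'s of the two primes above), and ramified primes contribute one weighted summand. Matching these against the residues modulo $24$ gives the claimed formula.

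There is really no substantive obstacle beyond bookkeeping: the heavy lifting has already been carried out in Theorem~\ref{Main} and Proposition~\ref{2-d}. The only point requiring minor care is the correct interpretation of the exponents $\beta_{\ppp_1}+\beta_{\ppp_2}$ and $\beta_{\qqq_1}+\beta_{\qqq_2}$ in the split cases, but this is immediate from the direct sum decomposition above, since the sum over $\ppp\in S_2\setminus S$ of $\z/4$ is simply $(\z/4)^{\beta_{\ppp_1}+\beta_{\ppp_2}}$ when $2$ splits into $\ppp_1,\ppp_2$, and analogously for $3$.
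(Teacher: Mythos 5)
Your proposal is correct and follows essentially the same route as the paper: verify via Theorem~\ref{dir} that $|S|\geq 2$ gives infinitely many units, apply Theorem~\ref{Main}, sort the primes above $2$ and $3$ into $S_2$, $S_2'$, $S_3$ according to the splitting behavior determined by Proposition~\ref{2-d}, and record membership in $S$ through the exponents $\beta$. The paper's proof is just a more compressed version of this same bookkeeping, phrased by analogy with Theorem~\ref{d>0}.
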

\begin{proof}
Since $S$ contains at least two primes, Theorem~\ref{dir} implies that $\mathcal{O}_{d,S}^{\times}$
is infinite. As in Theorem~\ref{d>0}, applying Theorem~\ref{Main} yields 
an isomorphism $\SL_2(\mathcal{O}_{d,S})^{\ab} \simeq G_1 \oplus G_2$, where
\[
G_1= \begin{cases}
\ \ 0 & \text{if $2$ is inert}\\
(\z/4)^{\beta_{\ppp_1}}\oplus (\z/4)^{\beta_{\ppp_2}} & \text{if $2$ is split}\\
(\z/2\oplus \z/2)^{\beta_{\ppp}} & \text{if $2$ is ramified,}
\end{cases}
\]
\[
G_2= \begin{cases}
\ \ 0 & \text{if $3$ is inert}\\
(\z/3)^{\qqq_1}\oplus (\z/3)^{\qqq_2} & \text{if $3$ is split}\\
(\z/3)^{\qqq} & \text{if $3$ is ramified.}
\end{cases}
\]
%Here, $\ppp_1$ and $\ppp_2$ denote the prime ideals appearing in the factorization of 
%$2$ in the case where $2$ splits, while $\ppp$ denotes the (ramified) prime ideal when 
%$2$ is ramified. Similarly, $\qqq_1$ and $\qqq_2$ correspond to the primes in the 
%factorization of $3$ when it splits, and $\qqq$ denotes the ramified prime when $3$
%is ramified in $\mathcal{O}_{-d,S}$. 
The claim now follows from Proposition~\ref{2-d}.
\end{proof}

\begin{exa}
Let $d=-15$. Then $\OO_{-15}=\z[(1+\sqrt{-15})/2]$. By a result of Swan, 
\[
\SL_2(\OO_{-15})^\ab\simeq \z/12\oplus \z\oplus \z
\]
(see \cite[Corollary 15.2]{swan1971}). Since $-15\equiv 1 \pmod 8$, the prime $2$ splits in $\OO_{-15}$,
while $-15\equiv 0 \pmod 3$, so $3$ is ramified. In fact, we have the decompositions $2\OO_{-15}=\ppp_1\ppp_2$
and $3\OO_{-15}=\qqq^2$, where
\[
\ppp_1=(2, (1+\sqrt{-15}),\ \ \  \ppp_2=(2, (1-\sqrt{-15}), \ \ \ \qqq=(3, \sqrt{-15}).
\]
Therefore, if $S$ is any finite set of primes of $\OO_{-15}$ containing at least two elements, including the 
infinite prime, then, by the above theorem, we obtain
\[
\SL_2(\OO_{-15, S})^\ab\simeq (\z/4)^{\beta_{\ppp_1}+\beta_{\ppp_2}}
\oplus (\z/3)^{\beta_{\qqq}}.
\]
\end{exa}

\begin{rem}
We can show that the natural homomorphism 
\[
\SL_2(\OO_{-d})^\ab \arr \SL_2(\OO_{-d, S})^\ab
\]
is always surjective when $S$ contains at least the infinite prime. However, we do not provide a proof 
here, as this fact will not be used in the sequel.
\end{rem}

%%%%%%%%%%%%%%%%%%%%%%%%%%%%%%%%%%%%%%%%%%%%%%%%%%%%%%%%%%%%%%%%%%%%%%%%%%%%%%%%%%%%%%%%%%%%%
\section{Abelianization of \texorpdfstring{$\SL_2$}{Lg} over rings of integers of Galois extensions}
%%%%%%%%%%%%%%%%%%%%%%%%%%%%%%%%%%%%%%%%%%%%%%%%%%%%%%%%%%%%%%%%%%%%%%%%%%%%%%%%%%%%%%%%%%%%%

Let $K$ be a Galois extension of $\q$ of degree $n=[K:\q]$ and let $\mathcal{O}_K$ be its ring of 
integers. If $n=1$, then $K=\q$ and so $\OO_K=\z$. It is known that 
\[
\SL_2(\z)^\ab\simeq \z/12
\]
(see \cite[Example 3.4]{B-E2024}). If $n=2$, then $K$ is a quadratic field and $\SL_2(\OO_K)$ is 
discussed in the previous section.

So, let $n\geq 3$. By \cite[Theorem 3.11, Corollary 3.12]{keune2023}, for any prime $p \in \z$, $p\OO_K$ 
has the prime decomposition in $\mathcal{O}_K$,
\[
p\mathcal{O}_K \simeq (\mathfrak{p}_1\mathfrak{p}_2\cdots \mathfrak{p}_{m_p})^{e_p},
\]
where all primes $\ppp_i$ have the same ramification index $e_p$ and the same inertia degree
$f_p$, i.e., $[\OO_K/\ppp_i:\z/p]=f_p$, for all $1\leq i\leq m_p$. Observe that $n=e_pf_pm_p$.
 
\begin{thm}\label{Galois}
Let $K$ be a Galois extension of $\q$ of degree $n\geq 3$ and let $\mathcal{O}_K$ be the ring of 
integers of $K$. If
\[
2\mathcal{O}_K=(\ppp_1\ppp_2\cdots \ppp_m)^{e_2},\ \ \ \ \ \ 3\mathcal{O}_K=(\qqq_1\qqq_2\cdots \qqq_k)^{e_3}
\]
are the prime decompositions of $2\OO_K$ and $3\OO_K$, with $e_2$ and $e_3$ being the ramification indexes 
and $f_2$ and $f_3$ the corresponding inertia degrees, then
\begin{align*}
\SL_2(\mathcal{O}_K)^{\ab}\simeq
\begin{cases}
0 &\text{if $f_2>1,f_3>1$}\\
(\z/3)^{\frac{n}{e_3}} &\text{if $f_2>1, f_3=1$}\\
(\z/2\oplus \z/2)^{\frac{n}{e_2}} &\text{if $f_2=1,e_2>1, f_3>1$}\\
(\z/2\oplus \z/2)^{\frac{n}{e_2}}\oplus (\z/3)^{\frac{n}{e_3}} &\text{if $f_2=1, e_2>1,f_3=1$.}\\
(\z/4)^{n} &\text{if $f_2=1,e_2=1, f_3>1$}\\
(\z/4)^{n} \oplus (\z/3)^{\frac{n}{e_3}} &\text{if $f_2=1, e_2=1, f_3=1$}
\end{cases}
\end{align*}
\end{thm}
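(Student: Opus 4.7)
The plan is to apply Theorem \ref{Main} directly, using the rigidity of ramification data in Galois extensions to convert the abstract sum over $S_2$, $S_2'$, $S_3$ into the closed form of the theorem.

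First I would verify that $\OO_K^\times$ is infinite, so that Theorem \ref{Main} is available. Since $K/\q$ is Galois, either $K$ is totally real ($r_1=n$, $r_2=0$) or totally complex ($r_1=0$, $r_2=n/2$); in either case the Dirichlet rank $r_1+r_2-1$ is at least $1$ for $n\geq 3$ (for $n=3$, $K$ is necessarily totally real with rank $2$, and for $n\geq 4$ the bound $n/2-1\geq 1$ handles the totally complex case). Since $\OO_K = \OO_{K,S_\infty}$ with $S_\infty$ the set of infinite primes, the difference sets $S_2 \setminus S_\infty$, $S_2' \setminus S_\infty$, $S_3 \setminus S_\infty$ in the statement of Theorem~\ref{Main} are just $S_2$, $S_2'$, $S_3$.

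Next I would translate the three index sets into counts via the Galois uniformity. Because all primes above $2$ (resp.~$3$) share the same inertia degree $f_2$ (resp.~$f_3$) and ramification index $e_2$ (resp.~$e_3$), and there are $m_2 = n/(e_2 f_2)$ primes above $2$ and $m_3 = n/(e_3 f_3)$ primes above $3$:
\begin{itemize}
\item A prime $\ppp \mid 2$ lies in $S_2 \cup S_2'$ iff $f_2=1$; within this, it lies in $S_2$ iff $e_2=1$ and in $S_2'$ iff $e_2>1$. So $|S_2| = n$ when $f_2=e_2=1$ and zero otherwise; $|S_2'| = n/e_2$ when $f_2=1, e_2>1$ and zero otherwise.
\item A prime $\qqq \mid 3$ lies in $S_3$ iff $f_3=1$, in which case $|S_3| = m_3 = n/e_3$.
\end{itemize}

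Plugging these into the formula $\SL_2(\OO_K)^\ab \simeq (\z/4)^{|S_2|} \oplus (\z/2\oplus\z/2)^{|S_2'|} \oplus (\z/3)^{|S_3|}$ supplied by Theorem~\ref{Main} and enumerating the possibilities for the triple $(f_2, e_2, f_3)$ (noting that the value of $e_3$ only enters through $n/e_3$) yields exactly the six cases in the statement. There is no real obstacle here beyond bookkeeping; the one point demanding mild care is confirming that $\OO_K^\times$ is infinite in the totally complex case $n=4$, but once this is in hand the rest is a direct substitution.
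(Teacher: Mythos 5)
Your proposal is correct and follows essentially the same route as the paper: apply Theorem~\ref{Main} with $S$ the set of infinite primes, and use the Galois uniformity $n=e_pf_pm_p$ to count the primes in $S_2$, $S_2'$, $S_3$ according to the cases $(f_2,e_2,f_3)$. Your explicit check that $\OO_K^\times$ is infinite (via totally real/totally complex and the Dirichlet rank) is a point the paper leaves implicit, but otherwise the two arguments coincide.
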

\begin{proof}
Consider the sets $S_2$, $S_2'$ and $S_3$ as defined in the introduction.
\par {\bf Case (i)} $f_2>1$: Here, we have two cases: $f_3>1$ and $f_3=1$.
\par(i-a) If $f_3>1$, then $S_2=S_2'=S_3=\varnothing$.
\par (i-b) If $f_3=1$, then, from the equality $n=e_3f_3k$, we obtain $k=n/e_3$. 
Hence, 
\[
S_2=S_2'=\varnothing, \  \ \ S_3=\{\qqq_1,\qqq_2,\dots,\qqq_{\frac{n}{e_3}}\}.
\]
\par {\bf Case (ii)} $f_2=1$: Here, we have two cases: $e_2>1$ or $e_2=1$.
%\par(ii-a) We have two cases: $e_2>1$ or $e_2=1$. 
\par(ii-a) If $e_2>1$, 
then the prime $\ppp_i$ ramifies, for any $1 \leq i \leq m$. It follows from the 
relation $n=e_2f_2m$ that $m=n/e_2$. So, in this case, we have
\[
S_2=\varnothing, \ \ \ S_2^{\prime}=\{\ppp_1,\ppp_2,\dots,\ppp_{\frac{n}{e_2}}\}. 
\]
If $f_3>1$, then $S_3=\varnothing$. If $f_3=1$, then, as above,
$S_3=\{\qqq_1,\qqq_2,\dots,\qqq_{\frac{n}{e_3}}\}$.
\par(ii-b) Now, let $e_2=1$. Then 
\[
S_2=\{\ppp_1,\ppp_2,\dots,\ppp_n\}, \ \ \  S_2'=\varnothing. 
 \]
As in the previous case, if $f_3>1$, then $S_3=\varnothing$,
and if $f_3=1$, then
$S_3=\{\qqq_1,\qqq_2,\dots,\qqq_{\frac{n}{e_3}}\}$.

Now, the desired result follows from Theorem \ref{Main}.
\end{proof}

Although this theorem gives a description of $\SL_2(\mathcal{O}_K)^{\ab}$ when
$\OO_K$ is the ring of integers of a Galois extension $K/\q$, note that it depends 
on how the primes $2$ and $3$ are decomposed in $\mathcal{O}_K$. This task is not 
always easy, and sometimes we need to find these decompositions using the structure 
of $K$. 

\begin{exa}
Let $K=\q(\sqrt{-2}, \sqrt{3})$. It is easy to see that $K/\q$ is Galois. We know that 
$\displaystyle\OO_K=\z\Big[\frac{\sqrt{-6}+\sqrt{-2}}{2}\Big]$
(see \cite[Example 5.23]{keune2023}). Since $[K:\q] = 4$, $\OO_K^\times$ is infinite.
Let $\displaystyle\lambda := \frac{\sqrt{-6}+\sqrt{-2}}{2}$.
The factorizations of $2$ and $3$ in $\OO_K$ are
\[
2\OO_K = (2, \lambda + 1)^4, \ \ \ \ 3\OO_K = (3, \lambda - 1)^2 (3, \lambda + 1)^2.
\]
This shows that $e_2=4$, $f_2=1$ and $e_3=2$, $f_3=1$. Thus, by 
Theorem \ref{Galois}, we get
\[
\SL_2(\OO_K)^{\ab} \simeq (\z/2 \oplus \z/2) \oplus (\z/3)^2 \simeq \z/6\oplus \z/6.
\]
\end{exa}

We conclude by applying Theorem \ref{Galois} to cyclotomic fields.
Let $N$ be a positive integer and let $\zeta_N$ be the primitive $N$th root of unity.
Then the ring of algebraic integers of the cyclotomic field $K:=\q(\zeta_N)$ is
\[
\OO_K=\z[\zeta_N]
\]
(see \cite[Theorem 1.51]{keune2023}). Let $p \in \z$ be a prime and let
\[
p\mathcal{O}_K \simeq (\mathfrak{p}_1\mathfrak{p}_2\cdots \mathfrak{p}_{m_p})^{e_p}
\]
be the prime decomposition of $p\OO_K$.
Let $N=p^ns$, such that $p\nmid s$. Then $e_p=\phi(p^n)$ and the inertia degree 
$f_p$ is equal to the smallest positive integer such that ${N}/{p^n}=s \mid p^{f_p}-1$
(see \cite[Theorem 3.16]{keune2023}). 
%Observe that $\phi(N)=e_pf_pm_p$.

\begin{prp}\label{cyclotomic}
Let $N$ be a positive integer and let $\z[\zeta_N]$ be the ring of 
integers of the cyclotomic field $\q(\zeta_N)$. Then 
\[
\SL_2(\z[\zeta_N])^\ab \simeq \begin{cases}
\z/12     &  \text{if $N=1,2$}\\
\z/2 \oplus \z/2     &  \text{if $N=2^k$, $k\geq 2$,}\\
\z/3  & \text{if $N=2^k3^m$, $k\in \{0,1\}$, $m>0$.}\\
0 & \text{otherwise}
\end{cases}
\]
\end{prp}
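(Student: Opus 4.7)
The plan is to apply Theorem~\ref{Galois} to $K=\q(\zeta_N)$, after separately handling the five values $N\in\{1,2,3,4,6\}$ for which $\phi(N)\leq 2$ and the degree hypothesis of that theorem fails.

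For those small values, when $N\in\{1,2\}$ we have $\OO_K=\z$ and $\SL_2(\z)^\ab\simeq\z/12$ as recalled at the start of Section~5. For $N=4$ the ring of integers is $\z[i]=\OO_{-1}$, and for $N\in\{3,6\}$ it is $\z[\zeta_3]=\OO_{-3}$; these are the imaginary quadratic cases with $d=1,3$, both among the finitely many $d$'s for which $\OO_{-d}$ is a $\GE_2$-ring. Cohn's explicit computation, cited in Section~4 via \cite[p.~162]{cohn1968} and \cite[Proposition~4.7]{B-E2024}, gives $\SL_2(\z[i])^\ab\simeq\z/2\oplus\z/2$ and $\SL_2(\z[\zeta_3])^\ab\simeq\z/3$, which match the $k=2$ instance of the second line and the $m=1$ instance of the third.

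For every remaining $N$ one has $\phi(N)\geq 4$ and Theorem~\ref{Galois} applies. The output depends only on $e_2,f_2,e_3,f_3$, which are determined by the formulas recalled just above the proposition. Writing $N=2^a u$ with $u$ odd and $N=3^b v$ with $\gcd(v,3)=1$, one has $e_2=\phi(2^a)$, $f_2=\mathrm{ord}_u(2)$ and $e_3=\phi(3^b)$, $f_3=\mathrm{ord}_v(3)$. The key observations are that $f_2=1\iff u=1\iff N$ is a power of $2$ (since for odd $u\geq 3$ one has $2\not\equiv 1\pmod u$), and $f_3=1\iff v\in\{1,2\}\iff N=2^k3^m$ with $k\in\{0,1\}$ (since for $v\geq 4$ coprime to $3$ one has $3\not\equiv 1\pmod v$).

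This splits the remaining cases as follows. If $N=2^a$ with $a\geq 3$, then $f_2=1$, $e_2=2^{a-1}>1$, and $f_3>1$; since $\phi(N)=2^{a-1}=e_2$, Theorem~\ref{Galois} yields $(\z/2\oplus\z/2)^{\phi(N)/e_2}=\z/2\oplus\z/2$. If $N=2^k3^m$ with $k\in\{0,1\}$ and $m\geq 2$, then $f_2>1$ and $f_3=1$, and $\phi(N)=\phi(2^k)\phi(3^m)=2\cdot 3^{m-1}=e_3$, so Theorem~\ref{Galois} gives $(\z/3)^{\phi(N)/e_3}=\z/3$. In every other $N$ with $\phi(N)\geq 4$ one has both $f_2>1$ and $f_3>1$, so Theorem~\ref{Galois} returns the trivial group. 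I do not anticipate a serious obstacle: no step is deep once Theorem~\ref{Galois} is invoked, and the only external input is Cohn's calculation at $d=1,3$, which is precisely what is needed for the boundary cases $N\in\{3,4,6\}$ to fall in line with the general pattern.
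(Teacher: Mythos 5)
Your proposal is correct and follows essentially the same route as the paper: dispose of $N\in\{1,2,3,4,6\}$ via $\SL_2(\z)^\ab\simeq\z/12$ and Cohn's computations for $\z[i]$ and $\z[\zeta_3]$, then apply Theorem~\ref{Galois} using $e_p=\phi(p^{v_p(N)})$ and $f_p=$ the order of $p$ modulo $N/p^{v_p(N)}$. Your reformulation of the case split via the equivalences $f_2=1\iff N=2^a$ and $f_3=1\iff N=2^k3^m$ with $k\le 1$ is just a slightly more uniform packaging of the paper's three-way case analysis.
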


\begin{proof}
If $N=1,2$, then $\z[\zeta_N])=\z$. It is known that $\SL_2(\z)^\ab\simeq \z/12$
(see \cite[page 35]{cohn1966}) or \cite[Example 3.4]{B-E2024}). If $N=3, 6$, then
\[
\z[\zeta_3]=\z[(1+\sqrt{-3})/2]=\OO_{-3}=\z[(1-\sqrt{-3})/2]=\z[\zeta_6],
\]
and, if $N=4$, then 
$\z[\zeta_4]=\z[\sqrt{-1}]=\OO_{-1}$. Cohn has shown that 
\[
\SL_2(\OO_{-3})^\ab\simeq \z/3, \ \ \text{and} \ \ \SL_2(\OO_{-1})^\ab=\z/2\oplus \z/2
\]
(see \cite[page 162]{cohn1968} or \cite[Proposition 4.7]{B-E2024}). 

Thus, we may assume that $N\neq 1, 2, 3, 4, 6$. Then $[\q(\zeta_N):\q] \geq 3$ and so, by 
Theorem~\ref{dir}, $\z[\zeta_N]^{\times}$ is infinite. Let us consider the prime decompositions
\begin{align*}
2\z[\zeta_N]=(\ppp_1\ppp_2\cdots \ppp_r)^{e_2},\ \ \ \ \ 3\z[\zeta_N]=(\qqq_1\qqq_2\cdots \qqq_s)^{e_3}.
\end{align*}
Let $f_2$ and $f_3$ be the inertia degrees of $\ppp_i$ and $\qqq_j$, respectively. 
    
First, let  $N=2^k$, for some $k>2$ ($N\neq 2,4$). Then, by the above discussion, $f_2=1$ and 
\[
e_2=\phi(2^k)=\phi(N)>1.
\]
On the other hand, since the largest integer $l \in \z_{\geq 0}$ such that 
$3^l\mid N=2^k$ is $l=0$, we must have $f_3>1$. Now, by Theorem~\ref{Galois}, we have
\[
\SL_2(\z[\zeta_{2^k}])^\ab\simeq \z/2 \oplus \z/2.
\]

If $N=3^m$, for $m>1$, then a similar argument to the previous case shows that $f_2>1$, 
$e_2=1$, $f_3=1$ and $e_3=3^m=N$. Thus 
\[
\SL_2(\z[\zeta_{3^m}])^\ab\simeq \z/3.
\]
Now, suppose that $N \neq 2^n, 3^m$. Hence $N=2^k3^l n$, where  $2\nmid n$,
$3\nmid n$, $k, l \in \z_{\geq 0}$ and $k, l \neq 0$ in case $n=1$. We know that
$3^ln=N/2^k$, so to have $N/2^k\mid 2^{f_2}-1$, we must have $f_2>1$.  Also, since 
\[
2^kn=N/3^l \mid 3^{f_3}-1,
\]
we have $f_3>1$, if $n >1$ or $k> 1$, in 
this case, by Theorem \ref{Galois}, $\SL_2(\z[\zeta_N])^{ab}=0$. Now, if neither 
$n>1$ nor $k>1$, i.e. $N=2\cdot 3^{l}$, we must have $f_3=1$. But $e_3=\phi(3^l)$, 
hence $\phi(N)/e_3=\phi(2)=1$. Therefore, by Theorem \ref{Galois},  
$\SL_2(\z[\zeta_{2\cdot3^l}])^{ab}=\z/3$, for any $l$. This concludes the proof.
\end{proof}

%%%%%%%%%%%%%%%%%%%%%%%%%%%%%%%%%%%%%%%%%%%%%%%%%%%%%%%%

\end{document}